\def\N{{\Bbb N}}
\def\P{{\Bbb P}}
\def\Q{{\Bbb Q}}
\numberwithin{equation}{section}
\newcommand {\Cal}{\mathcal}
\newtheorem{lemma}{Lemma}[section]
\newtheorem{theorem}[lemma]{Theorem}
\newtheorem{definition}[lemma]{Definition}
\newtheorem{remark}[lemma]{Remark}
\def\leq{\leqslant}
\begin{document}
\title{Schmidt's subspace theorem for moving hypersurface targets}
\author{Nguyen Thanh Son, Tran Van Tan, and Nguyen Van Thin}
\date{$\quad$}
\maketitle
\vspace{-0.5cm}
\begin{abstract} It was discovered that there is a formal analogy between Nevanlinna theory and Diophantine approximation.
Via Vojta's dictionary, the Second Main Theorem in Nevanlinna theory corresponds to Schmidt's Subspace Theorem in Diophantine approximation. Recently, Cherry, Dethloff, and Tan (arXiv:1503.08801v2 [math.CV]) obtained a Second Main Theorem for moving hypersurfaces intersecting projective varieites. 
 In this paper, we shall give the counterpart of their  Second Main Theorem  in Diophantine approximation.\\

\noindent {\it Keywords}: Diophantine approximation, Schmidt's Subspace Theorem, Nevanlinna theory.\\
 Mathematics Subject Classification 2010. 11J68, 11J25, 11J97.

\end{abstract}

\section{Introduction}
Let $k$ be an algebraic number field of degree $d$. Denote $M(k)$ by the set of places (i.e., equivalent classes of absolute values) of $k$ and write
$M_{\infty}(k)$ for the set of Archimedean places. From $v \in M(k)$, we choose the normalized absolute value $| . |_{v}$ such that
$| . |_{v}=| . |$ on $\Q$ (the standard absolute value) if $v$ is archimedean, whereas for $v$ non-archimedean $|p|_{v}=p^{-1}$
if $v$ lies above the rational prime $p$. Denote by $k_v$ the completion of $k$ with respect to $v$ and by $d_v=[k_v: \Q_v]$ the local
degree. We put $\Vert.\Vert_{v}=| . |_{v}^{d_v/d}$. Then norm $||.||_v$ satisfies the following properties:

$(i)$ $||x||_v\ge 0$, with equality if and only if $x=0;$

$(ii)$ $||xy||_v=||x||_v||y||_v$ for all $x, y \in k;$

$(iii)$ $||x_1+\dots+x_n||_v\le B_v^{n_v}\cdot \max \{||x_1||_v, \dots, ||x_n||_v\}$ for all $x_1, \dots, x_n \in k$, $n\in \N$, where $n_v=d_v/d$, $B_v=1$ if $v$ is non-archimedean and $B_v=n$ if $v$ is archimedean.\\
Moreover, for each $x\in k\setminus \{0\}$, we have the  following product formula:
$$ \prod_{v\in M(k)}\Vert x\Vert_{v}=1. $$
For $v\in M(k)$, we also extend $\Vert .\Vert_{v}$ to an absolute value on the algebraic closure $\overline{k}_v.$ 

For $x\in k$, the  logarithmic height of $x$ is defined by $h(x)=\sum_{v\in M(k)} \log^{+}\Vert x\Vert_{v} $, where $\log^{+}\Vert x\Vert_{v}= \log \max \{\Vert x\Vert_{v}, 1\}.$

For $x=[x_0: \dots : x_M] \in \mathbb P^n(k)$, we set $\Vert x\Vert_{v}=\max_{0 \le i \le M}\Vert x_i\Vert_{v},$ and define the logarithmic height of $x$ by
\begin{align}\label{ct11s}
h(x)=\sum_{v\in M(k)} \log \Vert x\Vert_{v}.
\end{align}

For a positive integer $d$, we set
\begin{align*}
\Cal T_d := \big\{ (i_0,\dots,i_M) \in \N_0^{M+1}:
i_0 + \dots + i_M = d \big\}.
\end{align*}
Let $Q$ be a  homogeneous polynomial of degree $d$ in $k[x_0,\dots, x_M].$
 We write
\begin{align*}
Q= \sum\limits_{I \in \Cal T_{d}} a_{I}x^I.
\end{align*}
 Set $\Vert Q\Vert_v:=\max_{I}\Vert a_I\Vert_v.$ The height of $Q$ is defined by
\begin{align*}
h(Q):=\sum_{v\in M(k)}\log\Vert Q\Vert_v.
\end{align*}
For each $v\in M(k),$ the Weil function $\lambda_{Q,v}$ is defined by
\begin{align*}
\lambda_{Q,v}(x):=\log\frac{\Vert x\Vert^d_v\cdot \Vert Q\Vert_v}{\Vert Q(x)\Vert_v}, \quad x\in \P^M(k)\;\text{such that}\;Q(x)\ne 0.
\end{align*}

Let $\Lambda$ be an infinite index set.  We call    a moving hypersurface $Q$ in $\P^M(k)$ of degree $d,$ indexed by $\Lambda$ each  collection of polynomials $\{Q(\alpha)\}_{\alpha\in \Lambda}$ in $k[x_0,\dots,x_M].$  Then, we can write $Q=\sum_{I\in\mathcal T_d}a_I x^I,$ where $a_I$'s are functions from $\Lambda$ into $k$ having no common zeros points.
 
 Through  this paper, we consider an infinite index $\Lambda;$ a set $\Cal Q:=\{Q_1,\dots,Q_q\}$ of moving hypersurfaces in $\P^M(k),$ indexed by $\Lambda;$ an arbitrary projective variety $V\subset\P^M(k)$ of dimension $n$ generated by the homogeneous  ideal $\Cal I(V)$. We write  $$Q_j= \sum_{I\in \mathcal T_{d_j}}a_{j, I}x^{I}\;(j=1,\dots,q)\;\text{where}  \;d_j=\deg Q_j.$$

Let $A\subset\Lambda$ be an infinite subset and denote by $(A,a)$ each set-theoretic map $a: A\to k.$  
Denote by $\mathcal R_{A}^{0}$  the set of equivalence classes of pairs $(C,a)$, where $C\subset A$ is a subset with finite complement and $a: C \to k$ is a map; and the equivalence relation is defined as follows: $(C_1, a_1) \sim (C_2, a_2)$
if there exists $C \subset C_1\cap C_2$ such that $C$ has finite complement in $A$ and $a_1|_{C}=a_2|_{C}.$ Then $\mathcal R_{A}^{0}$ has an obvious ring structure. Moreover, we can embed $k$ into $\mathcal R_{A}^{0}$ as constant functions.

\begin{definition} \label{defi1.2s} For each $j\in\{1,\dots,q\}$, we write $\mathcal T_{d_j}=\{I_{j,1}, \dots, I_{j, M_{d_j}}\},$ where $M_{d_j}:=\binom{d_j+M}{M}.$ A subset $A\subset \Lambda$ is said to be coherent with respect to
$\Cal Q$ if for every polynomial
$ P\in k[x_{1,1}, \dots, x_{1, M_{d_1}}, \dots, x_{q,1}, \dots, x_{q, M_{d_q}}] $
that is homogeneous in $x_{j,1}, \dots, x_{j, M_{d_j}}$ for each $j=1, \dots, q$, either\\
$ P(a_{1, I_{1,1}}(\alpha), \dots, a_{1, I_{1, M_{d_1}}}(\alpha), \dots, a_{q, I_{q,1}}(\alpha), \dots, a_{q, I_{q, M_{d_q}}}(\alpha))$
vanishes for all $\alpha \in A$ or it vanishes for only finitely many $\alpha \in A.$
\end{definition}
By \cite{RV}, Lemma 2.1 there exists an infinite coherent subset $A\subset \Lambda$ with respect to $\Cal Q.$   For each $j\in\{1,\dots,q\},$ we fix an index $I_{j}\in\Cal T_{d_j}$ such that $a_{j, I_{j}}\not\equiv  0$ (this means that
$a_{j, I_{j}}(\alpha)\ne 0$ for all, but finitely many, $\alpha\in A$), then $\dfrac{a_{j, I}}{a_{j, I_{j}}}$ defines an element of $\mathcal R_{A}^{0}$ for any
 $I \in \mathcal T_{d_j}$. This element given by the following function:
$$ \{\alpha \in A: a_{j, I_{j}}(\alpha) \ne 0\} \to k, \quad\alpha \mapsto  \frac{a_{j, I}(\alpha)}{a_{j, I_{j}}(\alpha)}.$$
Moreover, by coherent, the subring of $\mathcal R_{A}^{0}$ generated over $k$ by such all elements is an integral domain (p.3, \cite{L}). We define $\mathcal R_{A, \mathcal Q}$ to be the field of fractions of that integral domain.

Denote by $\Cal A$ the set of all functions $ \{\alpha \in A: a_{j, I_{j}}(\alpha) \ne 0\} \to k, \quad\alpha \mapsto  \frac{a_{j, I}(\alpha)}{a_{j, I_{j}}(\alpha)}$ and $k_{\Cal Q}$ the set of all formal finite sum
$\sum_{m=1}^st_m\prod_{i=1}^sc_i^{n_i},$ where $t_m\in k,$ $c_i\in \Cal A, n_i\in\N.$ 
Each pair $(\widehat b,\widehat c)\in k_{\Cal Q}^2,$  ($\hat c(\alpha)\ne 0$ for all,  but finitely many,  $\alpha\in A$) defines a set-theoretic function, denoted by $\frac{\hat b}{\hat c}$, from $\{\alpha: \widehat c(\alpha)\ne 0\}$ to $k$,   $\frac{\widehat b}{\widehat c}(\alpha):=\frac{\widehat b(\alpha)}{\widehat c(\alpha)}.$  Denote by $\widehat{\mathcal R}_{A, \mathcal Q}$ the set of all such functions. Each element $a\in\mathcal R_{A, \mathcal Q}$ is a class of some functions $\widehat{a}$ in $\widehat{\mathcal R}_{A, \mathcal Q}.$ We call that $\widehat{a}$ is a special representative of $a.$  It is clear that for any two special representatives $\widehat a_1,\widehat a_2$ of the same element $a\in\mathcal R_{A, \mathcal Q},$ we have $\widehat a_1(\alpha)=\widehat a_2(\alpha)$ for all, but finitely many $\alpha\in A.$ For a polynomial $P:=\sum_I a_Ix^I\in\mathcal R_{A,\Cal Q}[x_0, \dots, x_M],$ assume that $\widehat a_I$ is a special representative  of $a_I.$ Then $\widehat P:=\sum_I \hat a_Ix^I$ is called  a special representative of $P.$  For each $\alpha\in A$ such that all functions $\widehat a_I$'s are well defined at $\alpha,$ we set $\widehat P(\alpha):=\sum_I \hat a_I(\alpha)x^I\in k[x_0,\dots,x_M];$ and we also say that the special representative $\widehat P$  is well defined at $\alpha.$ Note that  each special representative $\widehat P$ of $P$ is well defined at all, but finitely many, $\alpha\in A.$  If  $\widehat P_1, \widehat P_2$ are two special presentatives of $P,$ then   $\widehat P_1(\alpha)= \widehat P_2(\alpha)$ for all, but finitely many $A.$
\begin{definition}\label{defi1.5s}
A sequence of points $x=[x_0:\dots : x_M]: \Lambda \to V$ is said to be $V-$algebraically non-degenerate with respect to $\mathcal Q$ if  for each infinite coherent subset $A\subset \Lambda$ with respect to $\mathcal Q$, there is no  homogeneous polynomial $P\in \mathcal R_{A,\Cal Q}[x_0, \dots, x_M] \setminus \mathcal I_{A, \Cal Q}(V)$  such that
$\widehat P(\alpha)(x_0(\alpha), \dots, x_M(\alpha)) =0$ for all, but finitely many, $\alpha\in A$ for some (then  for all) representative $\widehat P$ of $P,$ where $\mathcal I_{A, \Cal Q}(V)$ is the ideal in $\mathcal R_{A, \mathcal Q}[x_0,\dots,x_M]$ genarated by $\Cal I(V).$
\end{definition}
\begin{definition}\label{defi1.6s}
We say that the family moving hypersurfaces $\Cal Q$  is $V-$admissible if  for each $1\le j_0<\dots <j_n \le q$,  the system of equations
$$ Q_{j_i} (\alpha)(x_0, \dots, x_M)=0, \hspace{1cm} 0\le i \le n$$
has no solution $(x_0, \dots, x_M)$ satisfying $(x_0:\cdots: x_M) \in V(\overline k),$ for all, but finitely many $\alpha\in\Lambda$,  where $\overline k$ is the algebraic closure of $k.$
\end{definition}

In 1997, Ru-Vojta \cite{RV} established the following Schmidt subspace theorem for the case of moving hyperplanes in projective spaces.\\

\noindent {\bf Theorem A.}  {\it Let $k$ be a number field and let $S\subset M(k)$ be a finite set containing all archimedean places. Let $\Lambda$ be an infinite index set and let $\mathcal H:=\{H_1, \dots, H_q\}$  be a set of moving  hyperplanes  in $\mathbb P^{M}(k),$ indexed by $\Lambda.$ Let $x=[x_0: \dots: x_M]: \Lambda \to \mathbb P^{M}(k)$ be a sequence of points. Assume that

$(i)$ $x$ is linearly nondegenerate with respect to $\mathcal H,$ which mean, for each infinite coherent subset $A\subset \Lambda$ with respect to $\Cal H,$ $x_0|_A,\dots,x_M|_A$ are linearly independent over $\Cal R_{A,\Cal H},$

$(ii)$ $h(H_j(\alpha))=o(h(x(\alpha)))$ for all $\alpha\in \Lambda$ and $j=1, \dots, q$ (i.e. for all $\delta>0$, $h(H_j(\alpha))\leq\delta h(x(\alpha))$ for all, but finitely many, $\alpha\in \Lambda).$

\noindent Then, for any $\varepsilon >0,$ there exists an infinite index subset $A\subset \Lambda$ such that
$$ \sum_{v\in S}\max_K\sum_{J\in K}\lambda_{H_j(\alpha), v}(x(\alpha)) \le (M+1+\varepsilon)h(x(\alpha))$$
holds for all $\alpha \in A.$ Here the maximum is taken over all subsets $K$ of $\{1,\dots,q\}$, $\#K=M+1$ such that $H_j(\alpha), j\in K$ are linearly independent over $k$ for each $\alpha\in\Lambda.$} \\

One of the most important developments in recent years in Diophantine approximation is Schmidt's subspace theorems for fixed hypersurfaces of Corvaja-Zannier \cite{CZ} and Evertse-Ferretti \cite{EF, EF2}. Motived by their paper, Min Ru \cite{R2,R3} obtained important results  on the Second Main Theorem for fixed hypersurfaces. Later, Dethloff-Tan \cite{DT} and Cherry-Dethloff-Tan \cite{CDT} generalized these Second Main Theorems  of  Min Ru to the case of moving results. Basing on method of Dethloff-Tan \cite{DT},  Chen-Ru-Yan \cite{CRY3} and Le \cite{G}  extended Theorem A to the case of moving hypersurfaces in the projective spaces. In this paper,  following the method of Cherry-Dethloff- Tan \cite{CDT}, we give the following counterpart of their  Second Main Theorem  in Diophantine approximation.  

\begin{theorem}\label{Schmidt} Let $k$ be a number field and let $S \subset M(k)$ be a finite set containing all archimedean places. Let $x=[x_0: \dots: x_M]: \Lambda \to V$ be a sequence
of points. Assume that

$(i)$ $\Cal Q$ is $V-$admissible,  and $x$ is $V-$ algebraically nondegenerate with respect to $\mathcal Q$;

$(ii)$ $h(Q_j(\alpha))=o(h(x(\alpha)))$ for all $\alpha\in\Lambda$ and $j=1, \dots,q$ (i.e. for all $\delta>0$, $h(Q_j(\alpha))\leq\delta h(x(\alpha))$ for all, but finitely many, $\alpha\in \Lambda).$

\noindent Then, for any $\varepsilon >0,$ there exists an infinite index subset $A\subset \Lambda$ such that
$$ \sum_{v\in S}\sum_{j=1}^{q}\lambda_{Q_j(\alpha), v}(Q_j(x(\alpha)) \le (n+1+\varepsilon)h(x(\alpha))$$
holds for all $\alpha \in A.$
\end{theorem}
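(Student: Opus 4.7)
The plan is to translate the approach of Cherry--Dethloff--Tan \cite{CDT} into the Diophantine setting, combining the Evertse--Ferretti Hilbert weight machinery \cite{EF,EF2} and its refinement by Ru \cite{R2,R3} with the Ru--Vojta moving-hyperplane subspace theorem (Theorem A above).

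\textit{Reductions.} First I would replace each $Q_j$ by $Q_j^{d/d_j}$, where $d:=\operatorname{lcm}(d_1,\dots,d_q)$, so that all $Q_j$ share the common degree $d$; the hypotheses $(i)$--$(ii)$ and the form of the conclusion (up to rescaling the constant) are preserved. By Lemma~2.1 of \cite{RV}, fix an infinite coherent subset $A_0\subset\Lambda$ with respect to $\mathcal Q$, so that the field $\mathcal R:=\mathcal R_{A_0,\mathcal Q}$ is defined. For each $v\in S$ and each $\alpha\in A_0$, let $\sigma_{v,\alpha}$ be the permutation of $\{1,\dots,q\}$ ordering the values $\|Q_j(\alpha)(x(\alpha))\|_v$ from smallest to largest. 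A finite pigeonhole over $(\mathfrak S_q)^{|S|}$ passes to an infinite $A_1\subset A_0$ on which $\sigma_{v,\alpha}=\sigma_v$ depends only on $v$; by $V$-admissibility, the forms $Q_{\sigma_v(1)}(\alpha),\dots,Q_{\sigma_v(n+1)}(\alpha)$ have no common zero on $V(\overline k)$ for all but finitely many $\alpha\in A_1$, for every $v\in S$.

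\textit{Hilbert weight step.} Fix a large integer $N$, let $L_N:=\dim H^0(V,\mathcal O_V(Nd))$, and, over $\mathcal R$, construct a basis $\phi^{(v)}_1,\dots,\phi^{(v)}_{L_N}$ of $H^0(V,\mathcal O_V(Nd))_{\mathcal R}$ adapted to the monomial filtration in $Q_{\sigma_v(1)},\dots,Q_{\sigma_v(n+1)}$, as in \cite{CDT}. The Hilbert weight estimate of Evertse--Ferretti, together with $(ii)$ to absorb coefficient heights and admissibility-based control of $\lambda_{Q_{\sigma_v(i)},v}$ for $i>n+1$, yields
$$\sum_{j=1}^{q}\lambda_{Q_j(\alpha),v}(x(\alpha))\;\le\;\frac{n+1}{NdL_N}\sum_{i=1}^{L_N}\lambda_{\phi^{(v)}_i(\alpha),v}(x(\alpha))+\varepsilon_1(N)h(x(\alpha))+o(h(x(\alpha))),$$
where $\varepsilon_1(N)\to 0$ as $N\to\infty$. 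Via the $Nd$-uple Veronese embedding $V\hookrightarrow\mathbb P^{L_N-1}$, the family $\{\phi^{(v)}_i\}_{v,i}$ becomes a set of moving hyperplanes on the Veronese image; the $V$-algebraic nondegeneracy of $x$ with respect to $\mathcal Q$ forces linear nondegeneracy over $\mathcal R$ of the lift $x^{(N)}$ with respect to $\{\phi^{(v)}_i\}$, and $(ii)$ gives the requisite height condition on the $\phi^{(v)}_i$. Applying Theorem A produces an infinite $A\subset A_1$ with
$$\sum_{v\in S}\sum_{i=1}^{L_N}\lambda_{\phi^{(v)}_i(\alpha),v}(x^{(N)}(\alpha))\le (L_N+\varepsilon_2)h(x^{(N)}(\alpha))=(L_N+\varepsilon_2)Nd\,h(x(\alpha))+o(h(x(\alpha))).$$
Substituting into the Hilbert weight bound and letting $N$ be large while $\varepsilon_1,\varepsilon_2$ are small recovers the constant $(n+1+\varepsilon)$, completing the argument.

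\textit{Main obstacle.} The decisive technical hurdle is executing the Hilbert weight construction over $\mathcal R_{A_0,\mathcal Q}$ and then pushing it down to honest inequalities at individual $\alpha\in A_0$. Using the formalism of special representatives developed in the preliminaries, I must choose $\widehat\phi^{(v)}_i\in\widehat{\mathcal R}_{A,\mathcal Q}[x_0,\dots,x_M]$ such that $(a)$ for all but finitely many $\alpha$ the evaluations $\widehat\phi^{(v)}_i(\alpha)$ form a basis of $H^0(V,\mathcal O_V(Nd))$ over $k$; $(b)$ their heights satisfy $h(\widehat\phi^{(v)}_i(\alpha))=o(h(x(\alpha)))$; and $(c)$ the $\mathcal R$-linear nondegeneracy extracted from Definition~\ref{defi1.5s} is strong enough to trigger Theorem A. Each item is bookkeeping once the coherent-subset formalism is unwound, but the combined verification---and the precise form of the Hilbert weight estimate in the moving setting---is where the bulk of the argument is spent.
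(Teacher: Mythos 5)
Your plan is essentially the paper's own proof: reduce to a common degree, fix a coherent subset, build a filtration-adapted basis of degree-$N$ forms on $V$ over the coefficient field $\mathcal R_{A,\mathcal Q}$ (this is exactly Lemmas~\ref{L0s}--\ref{L4s}, the moving analogue of the Cherry--Dethloff--Tan filtration), express these as linear combinations of a fixed basis, and apply the Ru--Vojta moving-hyperplane Theorem~A to the resulting moving linear forms. The only cosmetic divergences are that you first pigeonhole to freeze the ordering $\sigma_{v,\alpha}$ into $\sigma_v$, whereas the paper simply lets $J(v,\alpha)$ vary with $\alpha$ and absorbs that variation into the moving forms $L_\ell^{J(v,\alpha)}$ (taking the max over $K$ in Theorem~A handles it), and that you frame the filtration with $n+1$ forms while the paper uses the $n$ smallest and estimates the remaining $q-n$ trivially via Lemma~\ref{lem21s}; both choices lead to the same $(n+1+\varepsilon)$ after the cancellation. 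You also correctly identify the genuine technical core as carrying out the Hilbert-weight construction over $\mathcal R_{A,\mathcal Q}$ via special representatives, which is where the paper's Section~2 does all the work.
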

\begin{remark}\label{remark}  (i) By replacing $Q_j$ by $Q_j^{\frac{d}{d_j}},$ where $d=\text{lcm}\{d_1,\dots,d_q\},$ in Theorem \ref{Schmidt}, we may assume that $Q_1,\dots,Q_q$ have the same degree $d.$

(ii) By replacing $Q_j= \sum_{I\in \mathcal T_{d}}a_{j, I}x^{I}$ by $Q'_j = \sum\limits_{I \in \Cal T_{d}} \dfrac{a_{jI}}{a_{jI_{j}}}x^I$, in Theorem \ref{Schmidt}, we may assume that $Q_j\in\mathcal R_{A, \mathcal Q}[x_0,\dots,x_M].$
\end{remark}

\noindent {\bf Acknowledgements:} This research was supported by Vietnam National Foundation for Science and Technology Development (NAFOSTED) under grant number 101.02-2016.17. The second and the third named authors were partially supported by the Vietnam Institute for Advanced Studies in Mathematics.
Tran Van Tan  is currently Regular Associate Member of ICTP, Trieste, Italy. We would like to thank Gerd Dethloff for helpful comments on the first version of our paper.

\section{Some Lemmas}
We write
\begin{align*}
Q_j = \sum\limits_{I \in \Cal T_{d}} a_{jI}x^I, \quad (j = 0,\dots,q).
\end{align*}
Let $A\subset\Lambda$ be an infinity coherent subset with respect to $\Cal Q.$  For each $j\in\{1,\dots,q\},$ we fix an index $I_{j}\in\Cal T_{d}$ such that $a_{j, I_{j}}\not\equiv  0$ (this means that
$a_{j, I_{j}}(\alpha)\ne 0$ for all but finitely many $\alpha\in A$), then $\dfrac{a_{jI}}{a_{jI_{j}}}$ defines an element of $\mathcal R_{A}^{0}$ for any
 $I \in \mathcal T_{d}$.
 Set \begin{align*}
Q'_j = \sum\limits_{I \in \Cal T_{d}} \dfrac{a_{jI}}{a_{jI_{j}}}x^I, \quad (j = 0,\dots,q).
\end{align*}
\noindent Let $ t=(\dots, t_{jI},\dots)$ be a family of variables.
 Set
\begin{align*}
\widetilde{Q_j} = \sum\limits_{I \in \Cal T_{d}} t_{jI}x^I\in k[t,x].
\end{align*}
We have
$\widetilde{Q_j}(\dots,\dfrac{a_{jI}}{a_{jI_{j}}}(\alpha),\dots,x_0,\dots, x_M)=Q'_j(\alpha)(x_0,\dots,x_M)$ for all $\alpha\in A$ outside a finite subset.

Assume that the ideal $\Cal I(V)$ of $V$ is generated by homogeneous polynomials $P_1,\dots,P_m.$ Since $\Cal Q$ is  $V-$admissible, for each $J:=\{j_0,\dots,j_n\}\subset\{1,\dots,q\}$ there is a subset  $A_J\subset A$ with finite complement such that for all $\alpha\in A_J$ 
  the homogeneous polynomials 
 $P_1,\dots,P_m,Q'_{j_0}(\alpha),\dots,Q'_{j_n}(\alpha)\in k[x_0,\dots,x_M]$
 have no common non-trivial solutions in $\overline{k}^{M+1}.$  
Denote by $_{k[t]}(P_1,\dots, P_m, \widetilde{Q}_{j_0},\dots,\widetilde{Q}_{j_n})$
 the ideal in the ring of  polynomials in $x_0,\dots, x_M$ with coefficients in $k[t]$ generated by
$P_1,\dots,P_m,\widetilde{Q}_{j_0},\dots,\widetilde{Q}_{j_n}.$
 A polynomial $\widetilde R$ in $k[t]$  is called an
 {\it inertia form} of the polynomials $P_1,\dots, P_m, \widetilde{Q}_{j_0},\dots,\widetilde{Q}_{j_n}$ if it has the following property:
\begin{align*}
x_i^s\cdot \widetilde {R}\in \;_{k[t]}(P_1,\dots, P_m, \widetilde {Q}_{j_0},\dots,\widetilde{Q}_{j_n})
\end{align*}
for $i=0,\dots,M$ and for some non-negative integer $s$  (see e.g.  \cite{Z}).\\
It follows from the definition that the set $\Cal I$ of inertia forms of polynomials $P_1,\dots, P_m, \widetilde{Q}_{j_0},\dots,\widetilde{Q}_{j_n}$ is an ideal in $k[t].$

It is well known that $(m+n+1)$ homogeneous polynomials $P_i(x_0,\dots,x_M),$ $\widetilde{Q_j}(\dots,t_{jI},\dots,x_0,\dots, x_M),$ $ i\in\{1,\dots,m\},$
$j\in J$
have no
common non-trivial solutions in $x_0,\dots,x_M$
for special values $t_{jI}^0$ of $t_{jI}$
if and only if there exists an inertia form $\widetilde {R_J}^{t_{jI}^0}$  such that
$\widetilde R_J^{t_{jI}^0}(\dots,t_{jI}^0,\dots)\ne 0$ (see e.g. \cite{Z}, page 254). 
For  each $\alpha\in A_J,$ choose  $\widetilde {R_J}^{\alpha}\in\Cal I$ with respect to the special values $t_{jI}^\alpha:= \dfrac{a_{jI}}{a_{jI_{j}}}(\alpha).$ 
Set $R_J^{\alpha}:=\widetilde R_J^{\alpha}(\dots,\dfrac{a_{jI}}{a_{jI_{j}}},\dots).$ Then $R_J^{\alpha}$  is a special presentative of an element  $\mathcal R_{A,\Cal Q}.$ 
By construction, we have
\begin{align}\label{t1}
R_J^{\alpha}(\alpha)\ne 0 \;\text{ for all} \;\alpha\in A_J \;(\text{and hence, for all, but finitely many,}\;\alpha\in A) .
\end{align}
Since $k[t]$ is Noetherian,  $\Cal I$ is generated by finite polynomials $\widetilde{R}_{J_1},\dots,\widetilde{R}_{J_s}.$ For each $\alpha,$ we write
$\widetilde {R_J}^{\alpha}=\sum_{\ell=1}^s \widetilde{G}^{\alpha}_\ell\widetilde {R}_{J_\ell},$  $\widetilde G^{\alpha}_{\ell}\in k[t].$ We have that $G^\alpha_{\ell}:=\widetilde G^{\alpha}_\ell(\dots,\dfrac{a_{jI}}{a_{jI_{j}}},\dots),$ and $R_{J\ell}:=\widetilde {R}_{J\ell}(\dots,\dfrac{a_{jI}}{a_{jI_{j}}},\dots)$ are special representatives of elements in $\mathcal R_{A,\Cal Q}.$ It is clear that $R_J^{\alpha}=\sum_{\ell=1}^s G^{\alpha}_{\ell} R_{J\ell}.$
Hence, by (\ref{t1}), we have
\begin{align*}
0\ne R_J^{\alpha}(\alpha)=\sum_{\ell=1}^sG^\alpha_{\ell} (\alpha) R_{J_\ell}(\alpha)
\end{align*}
for all, but finitely many $\alpha\in A.$ 
Therefore, there is $\ell_0\in\{1,\dots,s\}$ such that  
\begin{align}\label{t2}
R_{J_{\ell_0}}(\alpha)\ne 0 \quad \text{for all, but finitely many}\quad \alpha\in A.
\end{align}
 Furthermore, by the definition of the inertia forms, there are a non-negative integer $s,$ polynomials
 $b_{i\ell}\in\mathcal R_{A, \Cal Q}[x_0, \dots, x_M]$ with $\deg b_{ij_k}=s-d$ and $\deg b_{i\ell}=s-\deg P_\ell$ such that
\begin{align}\label{ct17s}
R_{J_{\ell_0}}\cdot x_i^{s}=\sum_{k=0}^{n}b_{i j_k}\cdot Q'_{j_k}+\sum_{\ell=1}^{m}b_{i\ell}\cdot P_\ell,
\end{align}
for all $0\le i\le M$.

Let $x: \Lambda \to V\subset P^{M}(k)$ be a map. A map $(C, a) \in \mathcal R_{A}^{0}$ is called small with respect to $x$ if and only if
$$ h(a(\alpha))=o(h(x(\alpha))), $$
which mean that, for every $\varepsilon >0$, there exists a subset $C_{\varepsilon}\subset C$ with finite complement such that $h(a(\alpha))\le \varepsilon h(x(\alpha))$ for all $\alpha \in C_{\varepsilon}.$ We denote by $\mathcal K_x$ the set of all such small maps. Then, $\mathcal K_x$ is subring of $ \mathcal R_{A}^{0}$. It is not an entire ring, however, if $(C, a)\in \mathcal K_x$ and $a(\alpha)\ne 0$ for all but finitely $\alpha \in C$, then we have $(C\setminus \{\alpha: a(\alpha)=0\}, \dfrac{1}{a})\in \mathcal K_x.$
Denote by $\mathcal C_x$ the set of all positive functions $g$ defined over $\Lambda$ outside a finite subset of $\Lambda$ such that
$$ \log^{+}(g(\alpha)) =o(h(x(\alpha))).$$
Then $\mathcal C_x$ is a ring. Moreover, if $(C, a)\in \mathcal K_x$, then for every $v\in M(k)$, the function $||a||_v: C\to \mathbb R^{+}$ given by $\alpha \mapsto ||a(\alpha)||_v$ belongs to $\mathcal C_x$. Furthermore, if $(C, a)\in \mathcal K_x$ and $a(\alpha)\ne 0$ for all but finitely $\alpha \in C$, the function $g: \{\alpha|a(\alpha) \ne 0\}\mapsto \dfrac{1}{||a(\alpha)||_v}$ also belongs to $\mathcal C_x.$

From (\ref{t2}) and (\ref{ct17s}), similarly to Lemma 2.2 in \cite{G}, under the assumption of Theorem \ref{Schmidt} and Remark \ref{remark}, we have the following result.
\begin{lemma}\label{lem21s} Let $A\subset \Lambda$ be coherent with respect to $\Cal Q.$ Then for each $J\subset\{1,\dots,q\}$, there are functions $l_{1, v}, l_{2, v}\in\mathcal C_x$ such that
$$ l_{2, v}(\alpha)||x(\alpha)||_v^{d}\le \max_{j\in J}||Q_j(\alpha)(x(\alpha))||_v\le l_{1, v}(\alpha)||x(\alpha)||_v^{d},$$
for all $\alpha \in A$ ouside finite subset and all $v\in S.$
\end{lemma}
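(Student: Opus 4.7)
The plan is to establish the upper and lower bounds on $\max_{j\in J}\|Q_j(\alpha)(x(\alpha))\|_v$ separately. The upper bound is routine and follows directly from property (iii) of $\|\cdot\|_v$, while the lower bound is where the work lies: I would extract it from the inertia-form identity (\ref{ct17s}), which, once we pass to $V$, expresses $R_{J_{\ell_0}}(\alpha)x_i(\alpha)^s$ solely in terms of the $Q_{j_k}(\alpha)(x(\alpha))$.

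For the upper bound, I apply property (iii) to the $M_d$ monomials appearing in $Q_j(\alpha)(x(\alpha))=\sum_I a_{jI}(\alpha) x(\alpha)^I$, getting $\|Q_j(\alpha)(x(\alpha))\|_v\leq B_v^{(M_d)_v}\|Q_j(\alpha)\|_v\|x(\alpha)\|_v^d$, and set $l_{1,v}(\alpha):=B_v^{(M_d)_v}\max_{j\in J}\|Q_j(\alpha)\|_v$. Hypothesis (ii) of Theorem \ref{Schmidt} gives $\log^+\|Q_j(\alpha)\|_v\leq h(Q_j(\alpha))=o(h(x(\alpha)))$, so $l_{1,v}\in\Cal C_x$.

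For the lower bound, I first invoke Remark \ref{remark}(ii) to assume $Q_j=Q'_j\in\Cal R_{A,\Cal Q}[x_0,\dots,x_M]$, then evaluate a special representative of (\ref{ct17s}) on $x(\alpha)$. Since $x(\alpha)\in V$, the terms $P_\ell(x(\alpha))$ all vanish, leaving
$$R_{J_{\ell_0}}(\alpha)\,x_i(\alpha)^s=\sum_{k=0}^n b_{ij_k}(\alpha)(x(\alpha))\cdot Q_{j_k}(\alpha)(x(\alpha))\qquad (0\leq i\leq M).$$
Applying $\|\cdot\|_v$ with the same monomial bound to each $b_{ij_k}(\alpha)(x(\alpha))$ (which has degree $s-d$ in $x$), then maximizing over $i$, gives
$$\|R_{J_{\ell_0}}(\alpha)\|_v\|x(\alpha)\|_v^s\leq C_v\max_{i,k}\|b_{ij_k}(\alpha)\|_v\,\|x(\alpha)\|_v^{s-d}\,\max_k\|Q_{j_k}(\alpha)(x(\alpha))\|_v,$$
so I set $l_{2,v}(\alpha):=\|R_{J_{\ell_0}}(\alpha)\|_v\big/\big(C_v\max_{i,k}\|b_{ij_k}(\alpha)\|_v\big)$. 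This is strictly positive for all but finitely many $\alpha\in A$ by (\ref{t2}).

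The main obstacle is verifying $l_{2,v}\in\Cal C_x$. Control of the numerator is immediate: the coefficients of $R_{J_{\ell_0}}$ lie in $\Cal R_{A,\Cal Q}$, i.e. are rational expressions in the small quantities $a_{jI}/a_{jI_j}$, so $\log^+\|R_{J_{\ell_0}}(\alpha)\|_v\leq h(R_{J_{\ell_0}}(\alpha))=o(h(x(\alpha)))$. Control of the denominator needs a short argument: if every $b_{ij_k}$ were zero in $\Cal R_{A,\Cal Q}[x]$, the identity (\ref{ct17s}) would force $R_{J_{\ell_0}}=0$, contradicting (\ref{t2}); hence at least one coefficient $c_{i_0j_{k_0},J_0}\in\Cal R_{A,\Cal Q}$ of one of the $b_{ij_k}$ is nonzero, so $c_{i_0j_{k_0},J_0}(\alpha)\in k^\times$ for all but finitely many $\alpha\in A$. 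For such $\alpha$ the product formula yields
$$-\log\|c_{i_0j_{k_0},J_0}(\alpha)\|_v=\sum_{w\neq v}\log\|c_{i_0j_{k_0},J_0}(\alpha)\|_w\leq h(c_{i_0j_{k_0},J_0}(\alpha))=o(h(x(\alpha))),$$
and since $\max_{i,k}\|b_{ij_k}(\alpha)\|_v\geq\|c_{i_0j_{k_0},J_0}(\alpha)\|_v$, this controls $\log^+\bigl(1/\max_{i,k}\|b_{ij_k}(\alpha)\|_v\bigr)$. Combined with the numerator estimate, $\log^+l_{2,v}(\alpha)=o(h(x(\alpha)))$, finishing the proof.
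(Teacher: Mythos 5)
Your argument takes essentially the route the paper indicates (it defers to Lemma 2.2 of \cite{G}, which is precisely this inertia-form/product-formula computation built out of (\ref{t2}) and (\ref{ct17s})): triangle inequality for the upper bound, and the identity (\ref{ct17s}) evaluated on $x(\alpha)\in V$ together with (\ref{t2}) and the product formula for the lower bound. The one step I would tighten is the claim that vanishing of all the $b_{ij_k}$ would ``force $R_{J_{\ell_0}}=0$'': from (\ref{ct17s}) with every $b_{ij_k}=0$ one only gets, after dividing by the nonzero scalar $R_{J_{\ell_0}}\in\mathcal R_{A,\Cal Q}$, that $x_i^s\in\mathcal I_{A,\Cal Q}(V)$ for every $i$, which forces $V=\emptyset$ and so contradicts $\dim V=n\geq 0$ (equivalently, evaluate a well-defined specialization of (\ref{ct17s}) at some $\xi\in V(\overline{k})$ with $\xi_i\neq 0$ to contradict (\ref{t2})); also note that the identity (\ref{ct17s}) is only available for $\#J=n+1$, so the lower bound should be read for such $J$ (larger $J$ follows trivially by restricting), an imprecision already present in the lemma's statement.
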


For each positive integer $\ell$ and for each vector sub-space $W$ in $k[x_0, \dots, x_M]$ (or in $\mathcal R_{A, \Cal Q}[x_0, \dots, x_M]$), we denote by $W_\ell$ the vector space consisting of all holomogeneous polynomials in $W$ of degree $\ell$ (and  of the zero polynomial).

By the usual theory of Hilbert polynomials, for $N>>0$, we have
\begin{align*} H_V(N)&:=\dim_k \dfrac{k[x_0, \dots, x_M]_N}{\mathcal I(V)_N}\\
&=\dim_{\overline {k}} \dfrac{\overline {k}[x_0, \dots, x_M]_N}{\mathcal I(V(\overline k))_N}
=\deg V. \dfrac{N^n}{n!}+O(N^{n-1}).
\end{align*}

\begin{definition}\label{defi21s}
Let $W$ be a vector sub-space in $\mathcal R_{A, \Cal Q}[x_0,\dots,x_M].$  For each $\alpha \in A,$  we denote
$$W(\alpha):=\cup_{P\in W}\{\widehat P(\alpha):   \widehat P\; \text {is a special representative of } P,\;\text{well defined at}\; \alpha\}.$$
\end{definition}
 It is clear that $W(\alpha)$ is a vector sub-space of $k[x_0,\dots,x_M].$
\begin{lemma} \label{L0s}
Let $W$ be a vector sub-space in $\mathcal R_{A, \Cal Q}[x_0,\dots,x_M]_N.$   Then two following assertions hold:

$(i)$  There are $\gamma_j\in\mathcal R_{A, \Cal Q}[x_0,\dots,x_M]_N$, $j=1,\dots,H$ such that  $\gamma_j(\alpha),\dots, \gamma_H(\alpha)$ form a basis of $W(\alpha)$, for all, but finitely many, $\alpha\in A$ (i.e.
for any representative  $\widehat\gamma_j$ of
 $\gamma_j$, then  $\widehat\gamma_j(\alpha),\dots, \widehat\gamma_H(\alpha)$ form a basis of $W(\alpha)$, for all, but finitely many, $\alpha\in A)$. In particular, dimension of $W(\alpha)$ does not depend on $\alpha\in A$ outside a finite subset.

$(ii)$ Let $\{h_j\}_{j=1}^K$ be a basis of $W.$  Then $\{h_j(\alpha)\}_{j=1}^K$ forms a basis of $W(\alpha)$
(hence, $\dim_{\mathcal R_{A, \Cal Q}} W=\dim_{k}W(\alpha)$) for all, but finitely many, $\alpha\in A$   (i.e. for any special representative $\widehat h_j$ of   $h_j$, then $\{\widehat h_j(\alpha)\}_{j=1}^K$ forms a basis of $W(\alpha)$
 for all, but finitely many, $\alpha\in A$).  
\end{lemma}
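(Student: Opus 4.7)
The plan is to prove (ii) in full and deduce (i) by setting $\gamma_j := h_j$ for any $\mathcal R_{A,\mathcal Q}$-basis $\{h_1,\dots,h_K\}$ of $W$, so $H=K=\dim_{\mathcal R_{A,\mathcal Q}}W$. Write $\mathcal K := \mathcal R_{A,\mathcal Q}$ and $\mathcal M := \binom{N+M}{M}$. The foundation of the argument is a pair of coherence-based dichotomies for $\mathcal K$. \emph{(F1)} Every special representative $\widehat a$ of $0\in\mathcal K$ is identically zero on its (cofinite) domain; in particular, any two special representatives of the same element of $\mathcal K$ agree at every $\alpha$ in the intersection of their domains. \emph{(F2)} For $a\neq 0$ in $\mathcal K$, every special representative $\widehat a$ is nonzero at all but finitely many $\alpha\in A$. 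Both come from writing $\widehat a=\widehat n/\widehat d$ with $\widehat d$ eventually nonzero, clearing denominators by multiplying by a suitable product of the $a_{j,I_j}$'s, and applying the coherence dichotomy (``identically zero on $A$, or zero at only finitely many $\alpha$'') to the resulting block-homogeneous polynomial in the $a_{j,I}$'s.

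For the linear independence in (ii), expand $h_j = \sum_{I\in\mathcal T_N} c_{j,I}x^I$ with $c_{j,I}\in\mathcal K$, and let $H=(c_{j,I})$ be the $K\times\mathcal M$ coefficient matrix. Since $\{h_j\}$ is a $\mathcal K$-basis of $W$, $H$ has rank $K$ over $\mathcal K$; some $K\times K$ submatrix of $H$ therefore has nonzero determinant $\Delta\in\mathcal K$, and (F2) gives $\widehat\Delta(\alpha)\ne 0$ outside a finite set, forcing $\widehat h_1(\alpha),\dots,\widehat h_K(\alpha)$ to be $k$-linearly independent there. For the spanning, pick an $(\mathcal M-K)\times\mathcal M$ matrix $B$ over $\mathcal K$ of rank $\mathcal M-K$ with $BH^T=0$, so that $\ker_{\mathcal K}(B)$ coincides with the $\mathcal K$-span of the rows of $H$. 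Fix special representatives $\widehat B$ of $B$'s entries; (F2) applied to a nonvanishing $(\mathcal M-K)\times(\mathcal M-K)$ minor of $B$ shows that outside a finite set $E_0$ (which we enlarge to contain the exceptional set from the linear independence step) the matrix $\widehat B(\alpha)$ has full rank $\mathcal M-K$, so $\ker_k(\widehat B(\alpha))$ has $k$-dimension $K$. Given $u\in W(\alpha)$ for $\alpha\notin E_0$, write $u=\widehat P(\alpha)$ for some $P\in W$ with monomial-basis coordinate vector $\gamma\in\mathcal K^{\mathcal M}$; since $\gamma$ lies in $\ker_{\mathcal K}(B)$ we have $B\gamma=0$, so the componentwise product $\widehat B\widehat\gamma$ is a special representative of the zero vector and vanishes identically on its domain by (F1). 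As $\widehat P$ is defined at $\alpha$, the value $\widehat\gamma(\alpha)\in k^{\mathcal M}$ is defined and equals the coefficient vector of $u$; hence $\widehat B(\alpha)\cdot u = 0$, placing $u$ in $\ker_k(\widehat B(\alpha))$. Since $\mathrm{span}_k\{\widehat h_1(\alpha),\dots,\widehat h_K(\alpha)\}\subseteq\ker_k(\widehat B(\alpha))$ and both have $k$-dimension $K$, they coincide, and $u$ lies in the required span.

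The main obstacle to a naive approach is the apparent $P$-dependence of the exceptional finite sets: for each single $P\in W$ one can arrange that $\widehat P(\alpha)$ lies in $\mathrm{span}_k\{\widehat h_1(\alpha),\dots,\widehat h_K(\alpha)\}$ on a cofinite set of $\alpha$, but $W$ is typically infinite and a union of per-$P$ exceptional sets could be uncontrolled. The device that makes the argument uniform is the fixed defining matrix $B$, for which the single $\mathcal K$-relation $B\gamma_P=0$ encodes membership in $W$ for every $P\in W$ at once; combined with (F1), this $\mathcal K$-identity converts into the uniform $k$-linear constraint $\widehat B(\alpha)\cdot u = 0$ valid simultaneously for every $u\in W(\alpha)$ at every $\alpha\notin E_0$. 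The technical heart of the proof is thus (F1), which upgrades ``eventually zero'' to ``identically zero on a cofinite domain'' via coherence; once (F1) is in hand, the remainder is classical linear algebra over the field $\mathcal K$.
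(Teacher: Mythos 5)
Your proof is correct, but it follows a genuinely different route from the paper's. The paper proves assertion (i) first by a max-dimension argument: set $H:=\max_{\alpha\in A}\dim W(\alpha)$, pick $\alpha_0$ realizing the maximum, choose $\gamma_1,\dots,\gamma_H\in W$ whose special representatives evaluate to a basis of $W(\alpha_0)$, and then invoke coherence (nonvanishing of the determinant of an $H\times H$ coefficient submatrix) to propagate linear independence to a cofinite set; since $\dim W(\alpha)\le H$ by maximality, this forces a basis. With (i) in hand, the paper proves the spanning direction of (ii) by writing each of the finitely many $\gamma_s$ as $\sum_j t_{sj}h_j$ with $t_{sj}\in\mathcal R_{A,\mathcal Q}$ and evaluating; because only finitely many $\gamma_s$ are involved, the per-relation finite exceptional sets union to a finite set, so the argument needs only the weak fact that two special representatives of the same element agree cofinitely. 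You instead prove (ii) directly, deriving (i) afterward by taking $\gamma_j:=h_j$, which is a legitimate reorganization. Your spanning argument replaces the paper's finite list of $\gamma_s$ by a single ``defining'' matrix $B$ with $\ker_{\mathcal K}B$ equal to the $\mathcal K$-span of the coefficient rows of the basis; coherence applied to a maximal minor of $B$ gives $\operatorname{rank}_k\widehat B(\alpha)=\mathcal M-K$ cofinitely, so $\ker_k\widehat B(\alpha)$ has dimension $K$. Membership $u=\widehat P(\alpha)\in W(\alpha)$ is then forced into $\ker_k\widehat B(\alpha)$ by the identity $B\gamma_P=0$ over $\mathcal K$. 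The new ingredient you need to make this uniform over all $P\in W$ at once is your (F1) --- that a special representative of $0\in\mathcal R_{A,\mathcal Q}$ vanishes at \emph{every} point of its cofinite domain, not merely cofinitely --- which is a strictly stronger consequence of coherence than what the paper uses, though it is correct: after clearing denominators by $\prod_j a_{jI_j}^{N_j}$ the numerator becomes block-homogeneous in the $a_{jI}$'s, so the ``vanishes for all or only finitely many $\alpha$'' dichotomy applies. In summary: the paper's route is slightly lighter on the coherence machinery because (i) caps the number of relations to check; your route gives a cleaner conceptual upper bound $W(\alpha)\subseteq\ker_k\widehat B(\alpha)$ valid simultaneously for every $u\in W(\alpha)$, at the cost of establishing (F1). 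Both are sound.
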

\begin{proof} Set $H:=\max_{\alpha\in A}\dim W(\alpha).$ Taking $\alpha_0\in A$ such that $\dim W(\alpha_0)=H.$ Then, there are $\gamma_j\in\mathcal R_{A, \Cal Q}[x_0,\dots,x_M]_N$ $(j=1,\dots,H)$ and there are special representatives $\widehat\gamma_j$ of $\gamma_j$ $(j=1,\dots,H)$ such that $\{\widehat\gamma_1(\alpha_0),\dots, \widehat\gamma_H(\alpha_0)\}$ form a basis of $W(\alpha_0).$ 

Denote by $B$ the matrix of coefficients of $\{\widehat\gamma_j\}_{j=1}^H.$ Then, $B(\alpha_0)$ has rank $H.$ Hence, there is a square sub-matrix $B_1$ of $B$ with order $H$ such that $\det B_1(\alpha_0)\ne 0.$ By coherent of $A,$ there is a complement $A_1$  of a finite subset in $A,$ such that
$\det B(\alpha)\ne 0$ and all coefficients of $\widehat\gamma_j$'s are well defined at $\alpha,$ for all $\alpha\in A_1,$  Then, $\{\widehat\gamma_1(\alpha),\dots, \widehat\gamma_H(\alpha)\}$ are linearly independent, for all $\alpha\in A_1.$ On the other hand  $\dim W(\alpha)\leq H.$  Hence, $\{\widehat\gamma_1(\alpha),\dots, \widehat\gamma_H(\alpha)\}$ is a basis of $W(\alpha)$  for all $\alpha\in A_1.$ On the other hand, for any special representative $\widehat\gamma_j'$ of $\gamma_j,$ then $\widehat\gamma_j'(\alpha)=\widehat\gamma_j(\alpha)$ for all, but finitely many, $\alpha\in A$. Hence,  $\widehat\gamma_1'(\alpha),\dots, \widehat\gamma_H'(\alpha)$ also form a basis of $W(\alpha)$ for all, but finitely many, $\alpha\in A.$  This completes the proof of   assertion (i).

 Let $(c_{ij})$ be the matrix of coefficients of $\{h_j\}_{j=1}^K.$ Since
$\{h_j\}_{j=1}^K$ are linearly independent, there exists a square submatrix $C$ of $(c_{ij})$ with order $K$ and $\det C\not\equiv 0.$  Let $\widehat c_{ij}$ be an special representative  of $c_{ij}.$ Denote by $\widehat C$ the matrix which is defined from $C$ by replacing $c_{ij}$ by $\widehat c_{ij}.$ Then $\det\widehat C$ is a special representative of $\det C,$ and hence $\det\widehat C\not\equiv 0.$ By coherent of A,   $\det \hat C(\alpha)\ne 0$ and all coefficients of $\widehat h_{j}'s$ are well defined at $\alpha,$ for all, but finitely many, $\alpha\in A.$ By assertion (i), there are $\gamma_j\in\mathcal R_{A, \Cal Q}[x_0,\dots,x_M]_N$, $j=1,\dots,H$ such that  $\widehat\gamma_1(\alpha),\dots,\widehat\gamma_H(\alpha)$ form a basis of $W(\alpha)$ for all, but finitely many, $\alpha\in A$, where $\widehat\gamma_j$ is a special representative of $\gamma_j$.  We  write $\gamma_s=\sum_{j=1}^Kt_{sj}h_j$ with $t_{sj}\in \mathcal R_{A, \Cal Q}.$ Let $\widehat t_{sj}$ be a special representative of $t_{sj}$ $(s\in\{1,\dots,H\}, j\in\{1,\dots,K\}).$ Then $\sum_{j=1}^K\widehat t_{sj}\widehat h_j$ is a special representative of $\gamma_s.$ Hence, $\widehat\gamma_s(\alpha)=\sum_{j=1}^K\widehat t_{sj}(\alpha)\widehat h_j(\alpha)$ $(s=1,\dots,H)$ for all, but finitely many, $\alpha\in A$.   Combining with (i), we have that $\{\widehat h_j(\alpha)\}_{j=1}^K$ is a generating system of $W(\alpha)$ for all, but finitely many, $\alpha\in A$. On the other hand, since $\det \widehat C(\alpha)\ne 0$  and  all coefficients of $\widehat h_{ij}'s$ are well defined at $\alpha,$ for all, but finitely many, $\alpha\in A.$ Hence,  $\widehat h_1(\alpha),\dots,\widehat h_K(\alpha)$ are linearly independent for all, but finitely many, $\alpha\in A.$ By these facts, $\{\widehat h_1(\alpha),\dots,\widehat h_K(\alpha)\}$ is a basis of $W(\alpha),$ for all, but finitely many, $\alpha\in A.$
\end{proof}

Denote by $\mathcal I_{A, \Cal Q}(V)$  the ideal  in $\mathcal R_{A, \Cal Q }[x_0, \dots, x_M]$ generated by the elements in $\mathcal I(V).$ It is clear that $\mathcal I_{A, \Cal Q}(V)$
is also the sub-vector space of $\mathcal R_{A, \Cal Q}[x_0, \dots, x_M]$ generated by $\mathcal I (V ).$

We use the lexicographic order in $\N_0^n$ and for $I=(i_1,\dots,i_n),$ set $\Vert I\Vert :=i_1+\cdots+i_n.$
\begin{definition} For each 
 $I=(i_1,\cdots, i_n)\in \N_0^n$ and $N\in\N_0$ with $N\geq d\Vert I\Vert,$ denote by $\Cal L_N^I$ 
the set of all $\gamma\in\mathcal R_{A, \Cal Q }[x_0,\dots,x_M]_{N-d\Vert I\Vert}$ such that 
\begin{align*}
Q_1^{i_1}\cdots Q_n^{i_n}\gamma-
\sum_{E=(e_1,\dots,e_n)>I}Q_1^{e_1}\cdots Q_n^{e_n}\gamma_E\in \mathcal I_{A, \Cal Q }(V)_N.
\end{align*}
for some $\gamma_E\in \mathcal R_{A, \Cal Q} [x_0,\dots,x_M]_{N-d\Vert E\Vert}$.
\end{definition}
Denote by $\Cal L^I$ the homogeneous ideal in $\mathcal R_{A, \Cal Q } [x_0,\dots,x_M]$ generated by $\cup_{N\geq d\Vert I\Vert}\Cal L_N^I.$ 
\begin{remark}\label{r1} i) $\Cal L_N^I$ is a $\mathcal R_{A, \Cal Q }$-vector sub-space of $\mathcal R_{A, \Cal Q }[x_0,\dots,x_M]_{N-d\Vert I\Vert},$ and
 $(\Cal I(V), Q_1,\dots,Q_n)_{N-d\Vert I\Vert}\subset \Cal L_N^I,$   where $(\Cal I(V), Q_1,\dots,Q_n)$ is
 the ideal in $\mathcal R_{A, \Cal Q }[x_0,\dots,x_M]$ generated by
 $\Cal I(V)\cup\{Q_1,\dots,Q_n\}.$ 

ii) For any $\gamma\in \Cal L_N^I$ and $P\in \mathcal R_{A, \Cal Q }[x_0,\dots,x_M]_k,$ we have $\gamma\cdot P\in\Cal L_{N+k}^I$

iii) $\Cal L^I\cap \mathcal R_{A, \Cal Q }[x_0,\dots,x_M]_{N-d\Vert I\Vert}=\Cal L_N^I.$
   
iv) $\frac{\mathcal R_{A, \Cal Q }[x_0, \dots, x_M]}{\Cal L^I}$ 
is a graded modul over the graded ring $\mathcal R_{A, \Cal Q }[x_0,\dots,x_M].$
\end{remark}
\begin{lemma} \label{inftylemma} $\#\{\Cal L^I: I\in \N_0^n\}<\infty.$ 
\end{lemma}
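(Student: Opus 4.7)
The plan is to combine a monotonicity property of the family $\{\mathcal{L}^I\}$ with Dickson's lemma and Noetherianity of the polynomial ring $\mathcal{R}_{A,\mathcal{Q}}[x_0,\dots,x_M]$.

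The key step I would carry out first is to show that if $I\leq I'$ componentwise (that is, $i_k\leq i'_k$ for every $k$), then $\mathcal{L}^I\subseteq \mathcal{L}^{I'}$. For $\gamma\in \mathcal{L}_N^I$ with $Q_1^{i_1}\cdots Q_n^{i_n}\gamma\equiv \sum_{E>I}Q_1^{e_1}\cdots Q_n^{e_n}\gamma_E \pmod{\mathcal{I}_{A,\mathcal{Q}}(V)_N}$, I would multiply through by $Q_1^{i'_1-i_1}\cdots Q_n^{i'_n-i_n}$. Setting $N':=N+d(\|I'\|-\|I\|)$ and substituting $E':=E+(I'-I)$, the right-hand side becomes $\sum Q_1^{e'_1}\cdots Q_n^{e'_n}\gamma_E$ modulo $\mathcal{I}_{A,\mathcal{Q}}(V)_{N'}$, with $\deg \gamma_E=N-d\|E\|=N'-d\|E'\|$. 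The point needing verification is that $E>I$ in lex order implies $E'>I'$ in lex order: if $k$ is the first coordinate at which $E$ and $I$ differ, then $e'_j=i'_j$ for $j<k$ and $e'_k>i'_k$, so the same $k$ witnesses $E'>I'$. Hence $\gamma\in \mathcal{L}_{N'}^{I'}$, and since $\mathcal{L}^I$ is generated by such $\gamma$'s over all $N\geq d\|I\|$, the ideal containment $\mathcal{L}^I\subseteq\mathcal{L}^{I'}$ follows.

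With monotonicity in hand, I would close by contradiction. Dickson's lemma says $(\mathbb{N}_0^n,\leq)$ is a well-quasi-order, so any infinite sequence in $\mathbb{N}_0^n$ admits an infinite componentwise weakly increasing subsequence. Meanwhile, $\mathcal{R}_{A,\mathcal{Q}}$ is a field, so $\mathcal{R}_{A,\mathcal{Q}}[x_0,\dots,x_M]$ is Noetherian by the Hilbert basis theorem. If $\{\mathcal{L}^I:I\in \mathbb{N}_0^n\}$ were infinite, one picks $I_1,I_2,\dots$ with pairwise distinct $\mathcal{L}^{I_k}$; Dickson yields a subsequence $I_{k_1}\leq I_{k_2}\leq\cdots$ componentwise, and monotonicity then produces an ascending chain $\mathcal{L}^{I_{k_1}}\subseteq \mathcal{L}^{I_{k_2}}\subseteq\cdots$ that must eventually stabilize, contradicting the pairwise distinctness.

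The only step of real content is the monotonicity---specifically, verifying that shifting both sides of a strict lex inequality by a nonnegative vector preserves strict lex comparison. Everything else is packaging via standard commutative algebra.
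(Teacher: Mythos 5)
Your proof is correct and follows essentially the same route as the paper: establish the componentwise monotonicity $I\leq I'\Rightarrow \Cal L^I\subseteq\Cal L^{I'}$, extract a componentwise weakly increasing subsequence from any infinite sequence of multi-indices, and conclude by Noetherianity of $\mathcal R_{A,\Cal Q}[x_0,\dots,x_M]$. The only cosmetic difference is that you invoke Dickson's lemma by name, while the paper carries out the iterated subsequence extraction by hand (first for the $1$st coordinate, then the $2$nd, and so on), and you spell out the verification that a componentwise shift preserves strict lexicographic order, which the paper asserts with less detail.
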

\begin{proof} 
Suppose that $\#\{\Cal L^I: I\in \N_0^n\}=\infty.$ Then there exists an infinite sequence $\{\Cal L^{I_k}\}_{k=1}^{\infty}$ consisting of pairwise different ideals. We write
$I_k=(i_{k1},\dots,i_{kn}).$ Since $i_{k\ell}\in\N_0$, there exists an infinite sequence of positve integers $p_1<p_2<p_3<\cdots$ such that 
$i_{p_1\ell}\leq i_{p_2\ell}\leq i_{p_3\ell}\leq\cdots$, for all $\ell=1,\dots,n$\,: In fact, firstly we choose  a sub-sequence $i_{q_11}\leq i_{q_21}\leq i_{q_31}\leq \cdots$  of $\{i_{k1}\}_{k=1}^{\infty}.$ Next, we choose a sub-sequence of $i_{r_12}\leq i_{r_22}\leq i_{r_32}\leq \cdots$ of $\{i_{q_k2}\}_{k=1}^{\infty}.$  Continuing the above process until obtaining a sub-sequence $i_{p_1n}\leq i_{p_2n}\leq i_{p_3n}\leq\cdots.$

We now prove that: 
\begin{align}\label{Noertherian}
\Cal L^{I_{p_1}}\subset \Cal L^{I_{p_2}}\subset\Cal L^{I_{p_3}}\subset\cdots.
\end{align}
Indeed, for any  $\gamma\in\Cal L_N^{I_{p_k}}$ (for any $N$ and $k$ satisfying $N-\Vert I_{p_k}\Vert \geq 0$), we have
\begin{align*}
Q_1^{i_{p_k1}}\cdots Q_n^{i_{p_kn}}\gamma-
\sum_{E=(e_1,\dots,e_n)>I_{p_k}}Q_1^{e_1}\cdots Q_n^{e_n}\gamma_E\in \mathcal I_{\mathcal R_{A, \Cal Q}}(V)_N,
\end{align*}
 for some $\gamma_E\in \mathcal R_{A, \Cal Q } [x_0,\dots,x_M]_{N-d\Vert E\Vert}.$ 

\noindent Then, since $i_{p_{k+1}1}-i_{p_k1},\dots, i_{p_{k+1}n}-i_{p_kn}$ are non-negative integers, we have
\begin{align*}
Q_1^{i_{p_{k+1}1}}\cdots Q_n^{i_{p_{k+1}n}}\gamma-
\sum_{E=(e_1,\dots,e_n)>I_{p_k}}Q_1^{e_1+(i_{p_{k+1}1}-i_{p_k1})}\cdots Q_n^{e_n+(i_{p_{k+1}n}-i_{p_kn})}\gamma_E\in \mathcal I_{\mathcal R_{A, \Cal Q}}(V)_N.
\end{align*}
On the other hand since $E=(e_1,\dots,e_n)>I_{p_k}$ we have $(e_1+i_{p_{k+1}1}-i_{p_k1},\dots,e_n+i_{p_{k+1}n}-i_{p_kn})>I_{p_{k+1}}$. Therefore, $\gamma\in\Cal L_{N-d\Vert I_{p_k}\Vert+d\Vert I_{p_{k+1}}\Vert}^{I_{p_{k+1}}}.$
Hence, $\Cal L_N^{I_{p_k}}\subset \Cal L_{N-d\Vert I_{p_k}\Vert+d\Vert I_{p_{k+1}}\Vert}^{I_{p_{k+1}}}$ for all $k, N.$ Therefore, $\Cal L^{I_{p_k}}\subset \Cal L^{I_{p_{k+1}}}$ for all $k$. We get (\ref{Noertherian}). 

\noindent Since $\mathcal R_{A, \Cal Q }[x_0,\dots,x_M]$ is a noetherian ring, the chain of ideals in (\ref{Noertherian}) becomes finally stationary. This is a contradiction.
\end{proof}

Set \begin{align*}
m_N^I:=\dim_{\mathcal R_{A, \Cal Q}}\frac{\mathcal R_{A, \Cal Q }[x_0,\dots,x_M]_{N-d\Vert I\Vert}}{\Cal L_N^I}.
\end{align*}
For each positive integer $N,$ denote by $\tau_N$ the set of all $I:=(i_0,\dots,i_n)\in \N_0^n$ with $N-d\Vert I\Vert\geq 0.$

Let $\gamma_{I1},\dots,\gamma_{Im_N^I}\in  \mathcal R_{A, \Cal Q}[x_0,\dots,x_M]_{N-d\Vert I\Vert}$ such that they form a basis of the $\mathcal R_{A, \Cal Q}-$ vector space $\frac{ \mathcal R_{A, \Cal Q}[x_0,\dots,x_M]_{N-d\Vert I\Vert}}{{\Cal L_N^I}}.$

Similarly to \cite{CDT}, Lemma 2.6, we have:
\begin{lemma}\label{L1s}
$\{[Q_{1}^{i_1}\cdots Q_{n}^{i_n}\cdot\gamma_{I1}],\dots,[Q_1^{i_1}\cdots Q_n^{i_n}\cdot\gamma_{Im_N^I}],\;I=(i_1,\dots,i_n)\in\tau_N\}$
is a basis of the $\mathcal R_{A, \Cal Q}$- vector space
$\frac{\mathcal R_{A, \Cal Q}[x_0,\dots,x_M]_N}{\mathcal I_{A, \Cal Q}(V)_N}.$
\end{lemma}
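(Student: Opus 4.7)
The plan is to mirror the structure of \cite[Lemma 2.6]{CDT}, specialized to the coefficient ring $\mathcal R_{A,\Cal Q}$. The argument splits into a spanning step and a linear independence step, both running by induction on the multi-indices $I\in\tau_N$ with respect to the lexicographic order, with the finiteness of $\tau_N$ (forced by the constraint $N-d\|I\|\geq 0$) ensuring termination.

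For linear independence, I will suppose a relation
$$\sum_{I\in\tau_N}\sum_{s=1}^{m_N^I}c_{Is}\,Q_1^{i_1}\cdots Q_n^{i_n}\gamma_{Is}\in\mathcal I_{A,\Cal Q}(V)_N$$
with $c_{Is}\in\mathcal R_{A,\Cal Q}$ and let $I^{*}$ be the lex-smallest index for which some $c_{I^{*}s}\neq 0$. Moving the $I>I^{*}$ terms to the right-hand side produces, modulo $\mathcal I_{A,\Cal Q}(V)_N$, exactly an expression of the form $\sum_{E>I^{*}}Q_1^{e_1}\cdots Q_n^{e_n}\widetilde{\gamma}_E$ appearing in the defining condition of $\Cal L_N^{I^{*}}$. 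This forces $\sum_s c_{I^{*}s}\gamma_{I^{*}s}\in\Cal L_N^{I^{*}}$, and since the classes $[\gamma_{I^{*}s}]$ were chosen as an $\mathcal R_{A,\Cal Q}$-basis of the quotient by $\Cal L_N^{I^{*}}$, they are linearly independent there, so all $c_{I^{*}s}$ vanish, contradicting the choice of $I^{*}$.

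For spanning, I will take $P\in\mathcal R_{A,\Cal Q}[x_0,\dots,x_M]_N$ and process it by induction up the lex order on $\tau_N$. At $I=(0,\dots,0)$, the basis hypothesis writes $P=\sum_s c_{0,s}\gamma_{0,s}+\rho_0$ with $\rho_0\in\Cal L_N^0$; unfolding the definition of $\Cal L_N^0$ replaces $\rho_0$, modulo $\mathcal I_{A,\Cal Q}(V)_N$, by a sum $\sum_{E>0}Q_1^{e_1}\cdots Q_n^{e_n}\mu_E^{(0)}$ with $\mu_E^{(0)}\in\mathcal R_{A,\Cal Q}[x_0,\dots,x_M]_{N-d\|E\|}$. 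Each $\mu_E^{(0)}$ then receives the same treatment against the basis of $\mathcal R_{A,\Cal Q}[x_0,\dots,x_M]_{N-d\|E\|}/\Cal L_N^E$, and the leftover piece $\rho_E\in\Cal L_N^E$, once multiplied by $Q_1^{e_1}\cdots Q_n^{e_n}$, is absorbed into terms indexed by strictly larger multi-indices $E'>E$. Since every step strictly advances the active multi-indices in the finite poset $\tau_N$, the recursion terminates and exhibits $[P]$ as the desired $\mathcal R_{A,\Cal Q}$-linear combination of the $[Q_1^{i_1}\cdots Q_n^{i_n}\gamma_{Is}]$.

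The main obstacle will be the bookkeeping across homogeneity and the moving-coefficient setting: at every invocation of the definition of $\Cal L_N^{I}$ I must check that the newly produced polynomial $\mu_E$ lives in the correct homogeneous component $\mathcal R_{A,\Cal Q}[x_0,\dots,x_M]_{N-d\|E\|}$ and that $E\in\tau_N$ automatically, so that the next induction step can invoke the basis $\{\gamma_{Es}\}$, which is only defined for $E\in\tau_N$. Once this homogeneity is recorded, the constraint $N-d\|E\|\geq 0$ is built into the definition and the concern disappears; the remainder of the argument is then formally the same as in \cite[Lemma 2.6]{CDT}.
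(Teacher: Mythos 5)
Your proposal matches the paper's proof in substance. The linear-independence step is identical: from a dependence relation, move to the quotient modulo $\mathcal I_{A,\mathcal Q}(V)_N$, identify the lex-smallest index $I^*$ with a nonzero coefficient, read off from the defining condition of $\mathcal L_N^{I^*}$ that $\sum_s c_{I^*s}\gamma_{I^*s}\in\mathcal L_N^{I^*}$, and conclude from the choice of the $\gamma_{I^*s}$ as a basis of the quotient. The spanning step is the same computation too — the paper packages it as a downward induction showing $Q_1^{i_1}\cdots Q_n^{i_n}\gamma_I$ lies in $\mathcal L+\mathcal I_{A,\mathcal Q}(V)_N$ for every $I$ (base case the lex-maximum of $\tau_N$, then specialize to $I=(0,\dots,0)$), while you run the unfolding forward from $I=0$ and argue termination by noting the active multi-indices strictly advance in the finite poset $\tau_N$; these are two presentations of one and the same recursion, and your homogeneity bookkeeping observation is exactly what makes it close.
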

\begin{proof}
Firstly, we prove that:
\begin{align}\label{a0s}
\{[Q_{1}^{i_1}\cdots Q_{n}^{i_n}\cdot\gamma_{I1}],\dots,[Q_{1}^{i_1}\cdots Q_{n}^{i_n}\cdot\gamma_{Im_N^I}],\;I=(i_1,\dots,i_n)\in\tau_N\},
\end{align}
 are linealy independent.

\noindent Indeed,
for any $t_{I\ell}\in  \mathcal R_{A, \Cal Q},$ $(I=(i_1,\dots,i_n)\in\tau_N, \ell\in\{1,\dots,m_N^I\})$ such that
\begin{align*}
\sum_{I\in\tau}\big(t_{I1}[Q_{1}^{i_1}\cdots Q_{n}^{i_n}\cdot\gamma_{I1}]+\cdots+t_{Im_N^I}[Q_{1}^{i_1}\cdots Q_{n}^{i_n}\cdot\gamma_{Im_N^I}]\big)=0.
\end{align*}
Then
\begin{align}\label{a1s}
\sum_{I\in\tau_N}Q_{1}^{i_1}\cdots Q_{n}^{i_n}\big(t_{I1}\gamma_{I1}+
\cdots+ t_{Im_N^I}\gamma_{Im_N^I}\big)\in\mathcal I_{A, \Cal Q}(V)_N.
\end{align}
By the definition of $\Cal L_N^I,$ and by (\ref{a1s}), we get
\begin{align*}
t_{I^*1}\gamma_{I^*1}+
\cdots+ t_{I^*m_N^{I^*}}\gamma_{I^*m_N^{I^*}}\in\mathcal L_N^{I^*},
\end{align*}
where $I^*$ is the smallest elements of $\tau_N.$\\
On the other hand, $\{\gamma_{I^*1},\dots,\gamma_{I^*m_N^{I^*}}\}$ makes a basis of $\frac{  \mathcal R_{A, \Cal Q}[x_0,\dots,x_M]_{N-d\Vert I^*\Vert}}{\mathcal L_N^{I^*}}.$
 \\
Hence,
\begin{align}\label{a2s}
t_{I^*1}=\cdots=t_{I^*m_N^{I^*}}=0.
\end{align}
Then, by (\ref{a1s}), we have
\begin{align*}
\sum_{I\in\tau_N\setminus \{I^*\}}Q_{1}^{i_1}\cdots Q_{n}^{i_n}\big(t_{I1}\gamma_{I1}+
\cdots+ t_{Im_N^I}\gamma_{Im_N^I}\big)\in\mathcal I_{A, \Cal Q}(V)_N.
\end{align*}
Then, similarly to (\ref{a2s}), we have
\begin{align*}
t_{\tilde I1}=\cdots=t_{\tilde Im_N^{\tilde I}}=0,
\end{align*}
where $\tilde I$ is the smallest element of $\tau_N\setminus\{I^*\}.$

\noindent Continueting the above process, we get that $t_{I\ell}=0$ for all $I\in\tau_N$ and $\ell\in\{1,\dots, m_N^I\},$ and hence, we get (\ref{a0s}).

Denote by $\Cal L$ the vector sub-space in $ \mathcal R_{A, \Cal Q}[x_0,\dots,x_M]_N$ generated by
$$\{Q_{1}^{i_1}\cdots Q_{n}^{i_n}\cdot\gamma_{I1},\dots,Q_{1}^{i_1}\cdots Q_{n}^{i_n}\cdot\gamma_{Im_N^I},\;I=(i_1,\dots,i_n)\in\tau_N\}.$$

Now we prove that: For any $I=(i_1,\dots,i_n)\in\tau_N,$  we have
\begin{align}\label{a3s}
Q_{1}^{i_1}\cdots Q_{n}^{i_n}\cdot\gamma_I\in \Cal L+\mathcal I_{A, \Cal Q}(V)_N,
\end{align}
for all $\gamma_I\in \mathcal R_{A, \Cal Q}[x_0,\dots,x_M]_{N-d\Vert I\Vert}.$

Set $I'=(i'_1,\dots,i'_n):=\max\{I: I\in\tau_N\}.$
Since, $\gamma_{I'1},\dots,\gamma_{I'm_N^{I'}}$
form a basis of $\frac{\mathcal R_{A, \Cal Q}[x_0,\dots,x_M]_{N-d\Vert I\Vert}}{{\Cal L_N^{I'}}},$  for any  $\gamma_{I'}\in \mathcal R_{A,\Cal Q}[x_0,\dots,x_M]_{N-d\Vert I'\Vert},$ we have
\begin{align}\label{a4s}
\gamma_{I'}=\sum_{\ell=1}^{m_N^{I'}}t_{I'\ell}\cdot\gamma_{I'\ell}+h_{I'\ell},\; \text{where} \;h_{I'\ell}\in\Cal L_N^{I'},\;\text{and}\;t_{I'\ell}
\in \mathcal R_{A, \Cal Q}.
\end{align}
On the other hand, by the defintion of $\Cal L_N^{I'}$ , we have
 $Q_{1}^{i'_1}\cdots Q_{n}^{i'_n}\cdot h_{I'\ell}\in \mathcal I_{A, \Cal Q}(V)_N$
(note that $I'=\max\{I: I\in\tau_N\}).$
\noindent Hence,
\begin{align*}
Q_{1}^{i'_1}\cdots Q_{n}^{i'_n}\cdot\gamma_{I'}=\sum_{\ell=1}^{m_N^{I'}}t_{I'\ell}Q_1^{i'_1}\cdots Q_n^{i'_n}
\cdot\gamma_{I'\ell}+Q_{1}^{i'_1}\cdots Q_{n}^{i'_n}\cdot h_{I'\ell}
\in  \Cal L+\mathcal I_{A, \Cal Q}(V)_N.
\end{align*}
We get (\ref{a3s}) for the case where $I=I'.$

Assume that (\ref{a3s}) holds for all $I>I^*.$ We prove that (\ref{a3s}) holds also for $I=I^*=(i^*_1,\dots,i^*_n).$

\noindent Indeed, similarly to (\ref{a4s}), for any  $\gamma_{I^*}\in \mathcal R_{A, \Cal Q}[x_0,\dots,x_M]_{N-d\Vert I^*\Vert},$ we have
\begin{align*}
\gamma_{I^*}=\sum_{\ell=1}^{m_N^{I^*}}t_{I^*\ell}\cdot\gamma_{I^*\ell}+h_{I^*\ell},\; \text{where} \;h_{I^*\ell}\in \Cal L_N^{I^*},
\;\text{and}\;t_{I^*\ell}\in \mathcal R_{A, \Cal Q} .
\end{align*}
Then,
\begin{align}\label{a5s}
Q_{1}^{i^*_1}\cdots Q_{n}^{i^*_n}\cdot\gamma_{I^*}=\sum_{\ell=1}^{m_N^{I^*}}t_{I_s\ell}Q_{1}^{i^*_1}\cdots Q_{n}^{i^*_n}
\cdot\gamma_{I^*\ell}+Q_{1}^{i^*_1}\cdots Q_{n}^{i^*_n}\cdot h_{I^*\ell}.
\end{align}
Since $h_{I^*\ell}\in\Cal L_N^{I^*},$ we have
\begin{align*}
Q_{1}^{i^*_1}\cdots Q_{n}^{i^*_n}\cdot h_{I^*\ell}=\sum_{E=(e_1,\dots,e_n)>I^*}Q_{1}^{e_1}\cdots Q_{n}^{e_n}\cdot g_{E},
\end{align*}
for some $g_E\in \mathcal R_{A, \Cal Q}[x_0,\dots,x_M]_{N-d\cdot\Vert E\Vert}.$

\noindent Therefore,  by the induction hypothesis,
\begin{align*}
Q_{1}^{i^*_1}\cdots Q_{n}^{i^*_n}\cdot h_{I^*\ell}\in \mathcal L+\mathcal I_{A, \Cal Q}(V)_N.
\end{align*}
Then, by (\ref{a5s}), we have
\begin{align*}
Q_{1}^{i^*_1}\cdots Q_{n}^{i^*_n}\cdot\gamma_{I^*}\in\mathcal L+\mathcal I_{A, \Cal Q}(V)_N.
\end{align*}
This means that (\ref{a3s}) holds for $I=I^*.$ Hence, by  induction we get (\ref{a3s}).

For any $Q\in \mathcal R_{A, \Cal Q}[x_0,\dots,x_M]_N,$ we write $Q=Q_{1}^0\cdots Q_{n}^0\cdot Q.$
Then by (\ref{a3s}), we have
$$Q\in\mathcal L+\mathcal I_{ A, \Cal Q}(V)_N.$$
Hence,
\begin{align*}
\{[Q_{1}^{i_1}\cdots Q_{n}^{i_n}\cdot\gamma_{I1}],\dots,[Q_{1}^{i_1}\cdots Q_{n}^{i_n}\cdot\gamma_{Im_N^I}],\;I=(i_1,\dots,i_n)\in\tau_N\}
\end{align*}
is a generating system of $\frac{\mathcal R_{A, \Cal Q}[x_0,\dots,x_M]_N}{\mathcal I_{A, \Cal Q}(V)_N}.$
Combining with (\ref{a1s}), we get the conclution of Lemma \ref{L1s}.
\end{proof}
Similarly to \cite{CDT}, Lemma, 2.8 we have:
\begin{lemma}\label{newlm} There are   integers $n_0$, $c$  and $c'$ such that the following assertions hold.

i) \;$\dim_{\mathcal R_{A, \Cal Q}[x_0,\dots,x_M]} \frac{\mathcal R_{A, \Cal Q}[x_0,\dots,x_M][x_0,\dots,x_M]_{N-d\Vert I\Vert}}{(\Cal I(V), Q_1,\dots,Q_n)_{N-d\Vert I\Vert}}=c$ 
for all $I\in\N_0^n, N\in\N_0$ satisfying $N-d\Vert I\Vert\geq n_0.$

ii) \;For each $I\in \N_0^n$ there is an integer $m^I$ such that $m^I=m_N^I$ for all $N\in\N_0$ satisfying $N-d\Vert I\Vert\geq n_0.$

iii) \; $m_N^I\leq c',$ for all $I\in\N_0^n$ and $N\in\N_0$ satisfying $N-d\cdot \Vert I\Vert\geq 0.$
\end{lemma}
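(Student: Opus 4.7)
My plan is to prove (i) first via Hilbert polynomial theory applied to the intersection $V\cap Z(Q_1,\dots,Q_n)$, then to obtain (iii) as a corollary of (i) using the inclusion recorded in Remark~\ref{r1}(i), and finally to deduce (ii) from the Hilbert--Serre theorem combined with the finiteness of the family $\{\Cal L^I\}_{I\in\N_0^n}$ established in Lemma~\ref{inftylemma}.

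For part (i), the key input is that $V$-admissibility forces $V\cap Z(Q_1,\dots,Q_n)$ to be zero-dimensional (or empty) over $\overline k$. Indeed, for all but finitely many $\alpha\in A$ and every $j\in\{n+1,\dots,q\}$, the specialized polynomials $Q_1(\alpha),\dots,Q_n(\alpha),Q_j(\alpha)$ have no common zero on $V(\overline k)$; since every positive-dimensional closed projective subvariety of $\P^M(\overline k)$ meets every hypersurface, $V(\overline k)\cap Z(Q_1(\alpha),\dots,Q_n(\alpha))$ must be zero-dimensional. Standard Hilbert-polynomial theory then produces an integer $n_0$ and a non-negative integer $c(\alpha)$ with
$$\dim_{k}\frac{k[x_0,\dots,x_M]_{N'}}{\bigl(\Cal I(V),Q_1(\alpha),\dots,Q_n(\alpha)\bigr)_{N'}}=c(\alpha)\quad\text{for all } N'\ge n_0.$$
To lift this to the generic setting I would apply Lemma~\ref{L0s} to the subspace $(\Cal I(V),Q_1,\dots,Q_n)_{N'}\subset \mathcal R_{A,\Cal Q}[x_0,\dots,x_M]_{N'}$: for almost all $\alpha$ its $\mathcal R_{A,\Cal Q}$-dimension equals the $k$-dimension of its specialization, and coherence of $A$ then forces $c(\alpha)$ to take a common value $c$ for cofinitely many $\alpha$, yielding (i).

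Part (iii) is then immediate: Remark~\ref{r1}(i) gives $(\Cal I(V),Q_1,\dots,Q_n)_{N-d\Vert I\Vert}\subset \Cal L_N^I$, so $m_N^I$ is bounded above by $\dim_{\mathcal R_{A,\Cal Q}} \mathcal R_{A,\Cal Q}[x_0,\dots,x_M]_{N-d\Vert I\Vert}/(\Cal I(V),Q_1,\dots,Q_n)_{N-d\Vert I\Vert}$; this equals $c$ for $N-d\Vert I\Vert\ge n_0$ by (i), and is trivially bounded by $\binom{M+n_0}{M}$ for $0\le N-d\Vert I\Vert<n_0$, so $c':=\max\{c,\binom{M+n_0}{M}\}$ works. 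For (ii), Remark~\ref{r1}(iii) identifies $m_N^I$ with the value at degree $N-d\Vert I\Vert$ of the Hilbert function of the finitely generated graded $\mathcal R_{A,\Cal Q}[x_0,\dots,x_M]$-module $\mathcal R_{A,\Cal Q}[x_0,\dots,x_M]/\Cal L^I$; by Hilbert--Serre this function agrees with a polynomial in large degree, and the uniform bound $c'$ from (iii) forces that polynomial to be a constant $m^I$. Since by Lemma~\ref{inftylemma} only finitely many distinct ideals $\Cal L^I$ arise, their individual stabilization thresholds can be absorbed into a single $n_0$ that works uniformly in $I$.

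The main obstacle is part (i): one must transfer both the zero-dimensionality of $V\cap Z(Q_1,\dots,Q_n)$ and the constancy of the Hilbert-function value from the specialized fibers over $\overline k$ to the generic fiber over the field $\mathcal R_{A,\Cal Q}$, which requires careful bookkeeping with Lemma~\ref{L0s} and the coherence hypothesis on $A$. Once (i) is established, parts (ii) and (iii) are essentially formal consequences of graded-module theory together with Lemma~\ref{inftylemma}.
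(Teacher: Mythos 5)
Your proposal is correct and takes essentially the same route as the paper: both rest on (a) specializing $(\Cal I(V),Q_1,\dots,Q_n)$ via Lemma~\ref{L0s} and admissibility to reduce to a fixed field, (b) Hilbert--Serre (in the paper also Matsumura, Theorem~14) for eventual-polynomial behaviour, and (c) Lemma~\ref{inftylemma} to make the stabilization threshold uniform in $I$. The two small differences are presentational. First, you prove (i) $\Rightarrow$ (iii) $\Rightarrow$ (ii), deducing constancy of the Hilbert polynomial of $\mathcal R_{A,\Cal Q}[x]/\Cal L^I$ from the uniform bound $c'$; the paper proves (i) $\Rightarrow$ (ii) $\Rightarrow$ (iii), deducing constancy instead from the termwise inequality $h^I\le h$ with $h\equiv c$ in large degree. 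Both are valid, and your ordering has no circularity since (iii) only uses (i) and the inclusion of Remark~\ref{r1}(i). Second, you make explicit the geometric reason the Hilbert polynomial in (i) is a constant — namely that $V(\overline k)\cap Z(Q_1(\alpha),\dots,Q_n(\alpha))$ is zero-dimensional because any positive-dimensional projective subvariety would meet $Z(Q_j(\alpha))$ for $j>n$, contradicting $V$-admissibility — whereas the paper invokes Hartshorne's Theorem~7.5 without spelling this out. One technical step you wave at ("for almost all $\alpha$ its $\mathcal R_{A,\Cal Q}$-dimension equals the $k$-dimension of its specialization") does need the auxiliary identity $(\Cal I(V),Q_1,\dots,Q_n)_{N'}(\alpha)=(\Cal I(V),Q_1(\alpha),\dots,Q_n(\alpha))_{N'}$, which the paper proves separately (its containment~(\ref{a7s}) plus the basis-specialization argument); that is implicit in your "careful bookkeeping" remark and should be made explicit in a final write-up.
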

\begin{proof} Denote by $(\Cal I(V), Q_{1},\dots, Q_{n})$ the ideal in $\mathcal R_{A, \Cal Q}[x_0,\dots,x_M]$ generated by $\mathcal I (V)\cup\{ Q_{1},\dots, Q_{n}\}.$ 
 For each $\alpha$ in $A$ such that all coefficients of $Q_{i}$'s are well defined at $\alpha,$ we denote  by
$(\mathcal I (V), Q_{1}(\alpha),\dots, Q_{n}(\alpha))$
the ideal in $k[x_0,\dots,x_M]$ generated by $ \mathcal I(V)\cup\{Q_{1}(\alpha),\dots,Q_{n}(\alpha)\}.$

We have
\begin{align}\label{a7s}
(\mathcal I (V),Q_{1}(\alpha),\dots,Q_{n}(\alpha))\subset ( \mathcal I (V),Q_{1},\dots,Q_{n})(\alpha).
\end{align}
Indeed,  for any $P\in(\mathcal I (V),Q_{1}(\alpha),\dots,Q_{n}(\alpha)),$ we write $P=G+Q_{1}(\alpha)\cdot P_1+\cdots+Q_{n}(\alpha)\cdot P_n,$ where $G\in \mathcal I (V),$
and $P_i\in k[x_0,\dots,x_M].$ Take $P'\in (I(V), Q_{1},\dots, Q_{n})$ which is defined by a presentation $\widehat P':=G+Q_{1}\cdot P_1+\cdots+Q_{n}\cdot P_n.$  It is clear that $\widehat P'(\alpha)=P.$
Hence, we get (\ref{a7s}).

\noindent Let $I$ be an arbitrary element in $\tau_N.$ Let $\{h_k:=\sum_{i=1}^n Q_{i}\cdot R_{ik}+\sum_{j=1}^{m_k}  g_{jk}\cdot\gamma_{jk}\}_{k=1}^K$
 be a basis of $(\mathcal I (V), Q_{1},\dots,Q_{n})_{N-d\cdot \Vert I\Vert},$ where $g_{jk}\in  \Cal I(V),$ and
 $R_{ik}, \gamma_{jk},\in  \mathcal R_{A, \Cal Q}[x_0,\dots,x_M]$ satisfying $\deg (Q_{i}\cdot R_{ik})=\deg (\gamma_{jk}\cdot g_{jk})=N-d\cdot \Vert I\Vert.$ Let
 $\widehat R_{ik},$ and $\widehat\gamma_{jk}$ be some special representatives of $ R_{ik},$ and $\gamma_{jk}$, respectively. Then
$\widehat h_k:=\sum_{i=1}^n Q_{i}\cdot \widehat R_{ik}+\sum_{j=1}^{m_k}  g_{jk}\cdot\widehat \gamma_{jk}$ is a representative of $h_k.$
By Lemma \ref{L0s}, and since $\Cal Q$ is a $V-$ admissible set, there exists $\alpha \in A$ such that:

i) $\{\widehat h_k(\alpha)\}_{k=1}^K$  is a basis  of $( \mathcal I (V),Q_1,\dots, Q_n)_{N-d\cdot \Vert I\Vert}(\alpha),$

ii)  all coefficients of
$Q_j, \hat R_{jk}, \widehat\gamma_{jk},  g_{jk}$ are well defined at $\alpha,$ and

iii) homogeneous polynomials $Q_{0}(\alpha),\dots,Q_{n}(\alpha)\in k[x_0,\dots,x_M]$ have no common zeros point in $V(\overline{k}).$ 

 \noindent On the other hand, it is clear that $\hat h_k(\alpha)\in(\mathcal I (V),Q_{1}(\alpha),\dots,Q_{n}(\alpha)),$ for all $k=1,\dots,K.$
Hence, by (\ref{a7s}), and by i), we have
\begin{align*}
(\mathcal I (V), Q_{1}(\alpha),\dots,Q_{n}(\alpha))_{N-d\cdot \Vert I\Vert}= (\mathcal I (V),Q_{1},\dots,Q_{n})_{N-d\cdot \Vert I\Vert}(\alpha).
\end{align*}
Then, we have
\begin{align*}
 \dim_{ \mathcal R_{A,\Cal Q }} (\mathcal I(V),Q_{1},\dots,Q_{n})_{N-d\cdot\Vert I\Vert}
&=\dim_{k}(\mathcal I (V),Q_{1},\dots,Q_{n})_{N-d\cdot\Vert I\Vert}(\alpha)\\
&=\dim_{k}(\mathcal I (V),Q_{1}(\alpha),\dots,Q_{n}(\alpha))_{N-d\cdot\Vert I\Vert}.
\end{align*}
This implies that
\begin{align}\label{aa8s}
\dim\frac{ \mathcal R_{A, \Cal Q} [x_0,\dots,x_M]_{N-d\Vert I\Vert}}{(\mathcal I(V),Q_{1},\dots,Q_{n})_{N-d\cdot\Vert I\Vert}}
&= \dim\frac{k[x_0,\dots,x_M]_{N-d\Vert I\Vert}}{(\mathcal I (V),Q_{1}(\alpha),\dots,Q_{n}(\alpha))_N}\notag\\
&=\dim\frac{\overline{k}[ x_0,\dots,x_M]_{N-d\Vert I\Vert}}{(\mathcal I (V(\overline{k})),Q_{1}(\alpha),\dots,Q_{n}(\alpha))_{N-d\cdot\Vert I\Vert}}.
\end{align}
On the other hand, by the Hilbert-Serre Theorem (\cite{H}, Theorem 7.5), there exist positive integers $n_1, c$  such that
\begin{align*}
\dim\frac{\overline{k}[ x_0,\dots,x_M]_{N-d\Vert I\Vert}}{(\mathcal I (V(\overline{k})),Q_{1}(\alpha),\dots,Q_{n}(\alpha))_{N-d\cdot\Vert I\Vert}}=c,
\end{align*}
for all $I\in\N_0^n$ and $ N\in\N_0$ satisfying $N-d\Vert I\Vert\geq n_1.$

\noindent Combining with (\ref{aa8s}), we have
\begin{align}\label{aa8}
\dim\frac{ \mathcal R_{A, \Cal Q}[ x_0,\dots,x_M]_{N-d\Vert I\Vert}}{(\mathcal I (V),Q_{1},\dots,Q_{n})_{N-d\cdot\Vert I\Vert}}
=c,
\end{align}
for all $I\in\N_0^n$ and $N\in\N_0$  satisfying $N-d\Vert I\Vert\geq n_1.$

Let $h^I$ and $h$ be the Hilbert functions of $\frac{ \mathcal R_{A, \Cal Q}[x_0,\dots,x_M]}{\Cal L^I}$ and $\frac{ \mathcal R_{A, \Cal Q}[x_0,\dots,x_M]}{(\Cal I(V), Q_1,\dots,Q_n)},$ respectively. Since  $(\Cal I(V), Q_1,\dots,Q_n)\subset \Cal L^I,$ we have $h^I\leq h.$ On the other hand, by Matsumura \cite{Ma}, Theorem 14,  $h^I(k)$ is a polynomial in $k$ for all $k>>0$ and by (2.12), we have $h(k)=c$ for all $k\geq n_1.$ Hence, there are constants $m^I$, $n_2$ such that $h^I(k)=m^I$ for all $k\geq n_2$ and then $m_N^I=h^I(N-d\Vert I\Vert)=m^I$  for all $N\in \N_0$ satisfying $N-d\Vert I\Vert\geq n_2.$ By Lemma~\ref{inftylemma}, we may choose $n_2$ common for all $I.$ 
Taking $n_0:=\max\{n_1,n_2\},$ we get Lemma \ref{newlm}, i) and ii).

We have $m^I_N=h^I(N-d\Vert I\Vert)\leq h(N-d\Vert I\Vert)\leq\max\{c, h(k): k=0,\dots, n_0\}.$ Hence, taking $c':=\max\{c, h(k): k=0,\dots, n_0\},$ we get  
Lemma~\ref{newlm}, iii).
\end{proof}
Set
\begin{align*} m:=\min_{I \in \N_0^n}m^I.
\end{align*}
We fix  $I_0=(i_{01},\dots,i_{0n})\in \N_0^n,$ and  $N_0\in \N_0$ such that $N_0-d\Vert I_0\Vert\geq n_0$ and $m^{I_0}_{N_0}=m.$

For each  positive integer $N,$  divisible by $d,$ denote by $\tau_N^0$ the set of all $I=(i_1,\dots,i_n)\in\tau_N$ such that $N-d\Vert I\Vert \geq n_0$ and $i_k\geq \max\{i_{01},\dots,i_{0n}\}$,  for all $k\in\{1,\dots, n\}.$ 

\noindent We have 
\begin{align}\label{ph1} \#\tau_N=\binom{\frac{N}{d}+n}{n}&=\frac{1}{d^n}\cdot\frac{N^n}{n!}+O(N^{n-1}), \notag\\
                                        \#\{I\in\tau_N: N-d\Vert I\Vert\leq n_0\}&=O(N^{n-1}),\notag\\
                                         \#\{I=(i_1,\dots,i_n)\in\tau_N:\; i_k&<\max_{1\leq\ell\leq n}i_{0\ell}, \;\text{for some}\;k\}=O(N^{n-1}),\;\text{and\: so}\notag\\
                                         \#\tau_N^0&=\frac{1}{d^n}\cdot\frac{N^n}{n!}+O(N^{n-1}).
\end{align}
Similarly to \cite{CDT}, Lemma 2.9, we have:
\begin{lemma} \label{L2s}
  $m_N^I=\deg V\cdot d^n$ for all $N>>0,$  divisible by $d,$ and  $I\in \tau^0_N.$
\end{lemma}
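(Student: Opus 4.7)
The strategy is to sum the numbers $m_N^I$ over all $I\in\tau_N$ to recover the Hilbert function of $V$, then isolate the contribution of $I\in\tau_N^0$ by proving that $m^I$ is forced to be the minimum $m$ on this set via a monotonicity argument. More precisely, by Lemma \ref{L1s} the classes of $Q_1^{i_1}\cdots Q_n^{i_n}\gamma_{I\ell}$ (over $I\in\tau_N$, $\ell\in\{1,\dots,m_N^I\}$) form a basis of $\mathcal{R}_{A,\mathcal{Q}}[x_0,\dots,x_M]_N/\mathcal{I}_{A,\mathcal{Q}}(V)_N$ over $\mathcal{R}_{A,\mathcal{Q}}$. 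Since $\mathcal{R}_{A,\mathcal{Q}}$ is a field extension of $k$ and $\mathcal{I}_{A,\mathcal{Q}}(V)_N=\mathcal{R}_{A,\mathcal{Q}}\otimes_k\mathcal{I}(V)_N$, flatness gives $\dim_{\mathcal{R}_{A,\mathcal{Q}}}\frac{\mathcal{R}_{A,\mathcal{Q}}[x_0,\dots,x_M]_N}{\mathcal{I}_{A,\mathcal{Q}}(V)_N}=H_V(N)=\deg V\cdot\frac{N^n}{n!}+O(N^{n-1})$, so
$$\sum_{I\in\tau_N}m_N^I=\deg V\cdot\frac{N^n}{n!}+O(N^{n-1}).$$

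The key step, and what I expect to be the main technical point, is the monotonicity claim: \emph{if $I\leq I'$ componentwise, then $m^{I'}\leq m^I$.} This is proved by the same device already used in Lemma \ref{inftylemma}. Given $\gamma\in\mathcal{L}_N^I$ with the defining relation $Q_1^{i_1}\cdots Q_n^{i_n}\gamma-\sum_{E>I}Q_1^{e_1}\cdots Q_n^{e_n}\gamma_E\in\mathcal{I}_{A,\mathcal{Q}}(V)_N$, multiplying through by $Q_1^{i'_1-i_1}\cdots Q_n^{i'_n-i_n}$ produces a relation exhibiting $\gamma$ as an element of $\mathcal{L}_{N'}^{I'}$, where $N'=N+d(\|I'\|-\|I\|)$ so that $N'-d\|I'\|=N-d\|I\|$; here one uses that strict lexicographic inequality on $\mathbb{N}_0^n$ is preserved under componentwise addition of a fixed nonnegative tuple, so the condition $E>I$ transforms into $E+(I'-I)>I'$. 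Consequently $\mathcal{L}_N^I\subset\mathcal{L}_{N'}^{I'}$ inside the common ambient space $\mathcal{R}_{A,\mathcal{Q}}[x_0,\dots,x_M]_{N-d\|I\|}$, whence $m_{N'}^{I'}\leq m_N^I$, and passing to the stable values supplied by Lemma \ref{newlm}(ii) yields $m^{I'}\leq m^I$.

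To finish, observe that every $I=(i_1,\dots,i_n)\in\tau_N^0$ satisfies $i_k\geq\max_\ell i_{0\ell}\geq i_{0k}$, so $I\geq I_0$ componentwise. The monotonicity gives $m^I\leq m^{I_0}=m$, and minimality of $m$ forces $m^I=m$, hence $m_N^I=m$ on $\tau_N^0$ for $N$ large and divisible by $d$ (by Lemma \ref{newlm}(ii)). Splitting the Hilbert sum accordingly and using Lemma \ref{newlm}(iii) together with the estimate $\#(\tau_N\setminus\tau_N^0)=O(N^{n-1})$ from (\ref{ph1}) to bound the complementary contribution by $O(N^{n-1})$, I obtain
$$m\cdot\#\tau_N^0+O(N^{n-1})=\deg V\cdot\frac{N^n}{n!}+O(N^{n-1}).$$
Substituting $\#\tau_N^0=\frac{1}{d^n}\cdot\frac{N^n}{n!}+O(N^{n-1})$, dividing by $N^n$, and letting $N\to\infty$ forces $m/d^n=\deg V$, i.e.\ $m=\deg V\cdot d^n$. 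Thus $m_N^I=\deg V\cdot d^n$ for all such $I\in\tau_N^0$, as required.
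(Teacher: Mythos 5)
Your proposal is correct and follows essentially the same route as the paper: a componentwise-monotonicity argument (multiplying the defining relation for $\mathcal{L}_N^I$ by $Q_1^{i'_1-i_1}\cdots Q_n^{i'_n-i_n}$, using that lexicographic strict inequality is preserved under componentwise shift) shows $m^I\leq m^{I_0}=m$ for $I\in\tau_N^0$, whence minimality forces $m^I_N=m$ on $\tau_N^0$; then summing $m_N^I$ over $\tau_N$ via Lemma \ref{L1s} recovers $H_V(N)$ and the $O(N^{n-1})$ bookkeeping yields $m=\deg V\cdot d^n$. The only substantive divergence is that you justify $\dim_{\mathcal{R}_{A,\mathcal{Q}}}\mathcal{R}_{A,\mathcal{Q}}[x_0,\dots,x_M]_N/\mathcal{I}_{A,\mathcal{Q}}(V)_N=H_V(N)$ by observing $\mathcal{I}_{A,\mathcal{Q}}(V)_N=\mathcal{R}_{A,\mathcal{Q}}\otimes_k\mathcal{I}(V)_N$ and invoking flatness of the field extension, whereas the paper proves the equivalent equality $\dim_{\mathcal{R}_{A,\mathcal{Q}}}\mathcal{I}_{A,\mathcal{Q}}(V)_N=\dim_k\mathcal{I}(V)_N$ by an explicit specialization-at-$\alpha$ argument on the coefficient matrix; both are valid, and yours is slightly cleaner.
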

\begin{proof}
 For any $\gamma\in\Cal L^{I^0}_{N_0},$ we have
\begin{align*}
T:=Q_1^{i_{01}}\cdots Q_n^{i_{0n}}\gamma-\sum\limits_{E=(e_{1},\dots,e_{n})>I_0}Q_1^{e_{1}}\cdots Q_n^{e_{n}}\gamma_E\in\Cal I_{A, \Cal Q}(V)_{N_0},
\end{align*}
for some $\gamma_E\in\mathcal R_{A, \Cal Q}[x_0,\dots,x_M]_{N-d\Vert E\Vert}.$

\noindent Then, for any $I=(i_1,\dots,i_n)\in\tau_N^0,$ we have
\begin{align}\label{h3}
Q_1^{i_{1}}\cdots Q_n^{i_{n}}\gamma&-\sum\limits_{E=(e_{1},\dots,e_{n})>I_0}Q_1^{e_{1}+i_1-i_{01}}\cdots Q_n^{e_{n}+i_n-i_{0n}}\gamma_E\notag\\
&=Q_1^{i_{1}-i_{01}}\cdots Q_n^{i_{n}-i_{0n}}\cdot T\in\Cal I_{A, \Cal Q}(V)_{N_0}.
\end{align}
On the other hand since $I\in\tau_N^0$ and $E>I_0,$ we have $(e_{1}+i_1-i_{01},\dots,e_{n}+i_n-i_{0n})>I.$

\noindent Hence, by (\ref{h3}) we have
$$\gamma\in\Cal L^I_{N_0+d\Vert I\Vert-d\Vert I_0\Vert}.$$
This implies that 
\begin{align*}
\Cal L^{I_0}_{N_0}\subset \Cal L^I_{N_0+d\Vert I\Vert-d\Vert I_0\Vert}.
\end{align*}
Then
\begin{align}
m=m^{I_0}_{N_0}&=\dim_{\mathcal R_{A, \Cal Q}}\frac{\mathcal R_{A, \Cal Q}[x_0,\dots,x_M]_{N_0-d\Vert I_0\Vert}}{\Cal L^{I_0}_{N_0}}\notag \\
&\geq\dim_{\mathcal R_{A, \Cal Q}}\frac{\mathcal R_{A, \Cal Q}[x_0,\dots,x_M]_{N_0-d\Vert I_0\Vert}}{\Cal L^I_{N_0+d\Vert I\Vert-d\Vert I_0\Vert}}\notag \label{tt}\\
&=m^I_{N_0+d\Vert I\Vert-d\Vert I_0\Vert}.
\end{align}
On the other hand since $\left(N_0+d\Vert I\Vert-d\Vert I_0\Vert\right)-d\Vert I\Vert =N_0-d\Vert I_0\Vert\geq n_0,$ and  $N-\Vert I\Vert\geq n_0$  (note that $I\in\tau_N^0$), by Lemma \ref{newlm}, we have
 $$m^I=m^I_{N_0+d\Vert I\Vert-d\Vert I_0\Vert}=m^I_{N}.$$
Hence, by (\ref{tt}), $m\geq m^I=m^I_{N}.$ Then, by the minimum property of $m,$ we get that 
\begin{align}\label{np1}
m^I_N=m\; \text{for all}\; I\in\tau_N^0.
\end{align}

We now prove that:  
\begin{align}\label{cl} \dim_{\mathcal R_{A, \Cal Q}}\Cal I_{A, \Cal Q}(V)_N=\dim_{k}\Cal I(V)_N. 
\end{align} 
Indeed,  let $\{P_1,\dots, P_s\}$ be a basis of the $k$ vector space $\Cal I(V)_N.$ 
It is clear that $\Cal I_{\mathcal R_{A, \Cal Q}}(V)_N$ is a vector space over $\mathcal R_{A, \Cal Q}$ generated by $\Cal I(V)_N,$ therefore $\{P_1,\dots, P_s\}$ is 
also a generating system of $\Cal I_{A, \Cal Q}(V)_N.$ Then,  for (\ref{cl}), it suffices to prove that
if $t_1,\dots, t_s\in\mathcal R_{A, \Cal Q}$ satisfy
\begin{align}\label{a6}
t_1\cdot P_1+\cdots+ t_s\cdot P_s\equiv 0,
\end{align} 
then $t_1=\dots=t_s\equiv 0.$
We rewrite (\ref{a6}) in the following form
\begin{align*}
C\cdot \left ( \begin{matrix}
t_1\cr
\cdot\cr
\cdot\cr
\cdot\cr
t_s\end{matrix}\right)=\left ( \begin{matrix}
0\cr
\cdot\cr
\cdot\cr
\cdot\cr
0\end{matrix}\right),
\end{align*}
where $C\in$Mat$(\binom{M+N}{N}\times s,\mathcal R_{A, \Cal Q}).$

\noindent If the above system of linear equations has non-trivial solutions, then rank$_{\mathcal R_{A, \Cal Q}} C<s.$ Then 
rank$_{k} C(z)<s$ for all, but finitely many $z\in A$.  Taking $a\in A$ such that 
rank$_{k} C(a)<s.$ Then the following system of linear equations 
\begin{align*}
C(a)\cdot \left ( \begin{matrix}
t_1\cr
\cdot\cr
\cdot\cr
\cdot\cr
t_s\end{matrix}\right)=\left ( \begin{matrix}
0\cr
\cdot\cr
\cdot\cr
\cdot\cr
0\end{matrix}\right),
\end{align*}
has some non-trivial solution $(t_1,\dots,t_s)=(\alpha_1,\dots,\alpha_s)\in k^s\setminus\{0\}.$ 
Then $\alpha_1\cdot P_1+\cdots+\alpha_s\cdot P_s\equiv 0,$ this is
a contradiction. Hence, we get (\ref{cl}).

By Lemma \ref{L1s} and (\ref{cl}), we have
\begin{align}\label{ttt}
\sum_{I\in\tau_N}m_N^I=\dim_{\mathcal R_{A, \Cal Q}}\frac{\mathcal R_{A, \Cal Q}[x_0,\dots,x_M]_N}{\Cal I_{A, \Cal Q}(V)_N}
&=\dim_{k}\frac{k[x_0,\dots,x_M]_N}{\Cal I(V)_N}\notag\\
&=\dim_{\overline{k}}\frac{\overline{k}[x_0,\dots,x_M]_N}{\Cal I(V(\overline{k}))_N}\notag\\
&=\deg V\cdot \frac{N^n}{n!} +O(N^{n-1}), 
\end{align}
for all $N$ large enough. \\
Combining with (\ref{np1}), we have
\begin{align}\label{aa9}
m\cdot\#\tau_N^0+\sum_{I\in\tau_N\setminus\tau_N^0}m_N^I=\deg V\cdot \frac{N^n}{n!} +O(N^{n-1}).
\end{align}
On the other hand by Lemma \ref{newlm}, $m_N^I\leq c',$ for all $I\in\tau_N\setminus\tau_N^0.$ Hence, by (\ref{ph1}), we have
\begin{align*}
m=\deg V\cdot d^n.
\end{align*}
Combining with (\ref{np1}), we have
\begin{align*}
m^I_N=\deg V\cdot d^n
\end{align*}
for all $I\in\tau_N^0.$
\end{proof}

Similarly to \cite{CDT}, Lemma 2.10, we have
\begin{lemma}\label{L3s}
For each $s\in\{1,\dots,n\},$ and for $N>>0,$ divisible by $d,$ we have:
\begin{align*}
\sum_{I=(i_1,\dots,i_n)\in\tau_N}m_N^I\cdot i_s\geq \frac{\deg V}{d\cdot (n+1)!}N^{n+1}-O(N^n).
\end{align*}
\end{lemma}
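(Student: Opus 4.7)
\medskip

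\noindent\textbf{Proof proposal.} The plan is to discard all indices outside the ``good'' set $\tau_N^0$ and exploit the precise value of $m_N^I$ on $\tau_N^0$ furnished by Lemma~\ref{L2s}. Since $m_N^I\geq 0$ for every $I\in\tau_N$ (it is the dimension of a quotient vector space), restricting the sum to $\tau_N^0$ only decreases it, so
$$
\sum_{I\in\tau_N} m_N^I\cdot i_s\ \geq\ \sum_{I\in\tau_N^0} m_N^I\cdot i_s\ =\ \deg V\cdot d^n\sum_{I=(i_1,\dots,i_n)\in\tau_N^0} i_s,
$$
where the last equality uses Lemma~\ref{L2s}. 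The key point is that on $\tau_N^0$ the multiplicity $m_N^I$ is a constant depending only on $V$, $n$, $d$, so the problem reduces to a purely combinatorial estimate for $\sum_{I\in\tau_N^0} i_s$.

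For this, I would first observe that $\tau_N^0$ is symmetric under permutations of coordinates: both of its defining conditions ($N-d\Vert I\Vert\geq n_0$ and $i_k\geq \max_\ell i_{0\ell}$ for all $k$) are symmetric in $i_1,\dots,i_n$. Hence
$$
\sum_{I\in\tau_N^0}i_s\ =\ \frac{1}{n}\sum_{I\in\tau_N^0}\Vert I\Vert,
$$
independently of $s$. Next I would compare $\tau_N^0$ to $\tau_N$: by the cardinality estimates in (\ref{ph1}) we have $\#(\tau_N\setminus\tau_N^0)=O(N^{n-1})$, while $\Vert I\Vert\leq N/d$ for every $I\in\tau_N$, so the replacement $\tau_N^0\leadsto\tau_N$ costs at most $O(N^n)$ in the sum $\sum\Vert I\Vert$.

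It then remains to evaluate the leading asymptotics of $\sum_{I\in\tau_N}\Vert I\Vert$. Grouping by the value of $\Vert I\Vert=k$, the number of $I\in\N_0^n$ with $\Vert I\Vert=k$ is $\binom{k+n-1}{n-1}$, and a routine computation gives
$$
\sum_{I\in\tau_N}\Vert I\Vert\ =\ \sum_{k=0}^{\lfloor N/d\rfloor} k\binom{k+n-1}{n-1}\ =\ \frac{n}{(n+1)!}\cdot\Bigl(\frac{N}{d}\Bigr)^{n+1}+O(N^n).
$$
Assembling the pieces yields
$$
\sum_{I\in\tau_N} m_N^I\cdot i_s\ \geq\ \deg V\cdot d^n\cdot\frac{1}{n}\cdot\Bigl(\frac{n N^{n+1}}{(n+1)!\,d^{n+1}}-O(N^n)\Bigr)\ =\ \frac{\deg V}{d\cdot(n+1)!}\,N^{n+1}-O(N^n),
$$
as required. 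The only real point of care is that the error from trading $\tau_N^0$ for $\tau_N$ and the error in the Riemann-sum style estimate of $\sum k\binom{k+n-1}{n-1}$ both land in $O(N^n)$, which they do because each is driven by a boundary of codimension one; there is no genuine obstacle beyond bookkeeping.
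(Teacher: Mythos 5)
Your argument is correct and follows essentially the same route as the paper: both restrict the sum to $\tau_N^0$ using nonnegativity of $m_N^I$, substitute the constant value $\deg V\cdot d^n$ from Lemma~\ref{L2s}, exploit the permutation symmetry of $\tau_N^0$ to replace $i_s$ by $\Vert I\Vert/n$, trade $\tau_N^0$ for $\tau_N$ at an $O(N^n)$ cost via the cardinality estimates in (\ref{ph1}), and evaluate the resulting sum by grouping on $\Vert I\Vert=k$ and applying the hockey-stick identity. The only cosmetic difference is that the paper writes the $n$ equal sums $\sum m_N^I i_1=\cdots=\sum m_N^I i_n$ explicitly before averaging, whereas you pass directly to $\frac{1}{n}\sum\Vert I\Vert$; the substance is identical.
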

\begin{proof}
Firstly, we note that if $I=(i_1,\dots,i_n)\in\tau^0_N,$ then all symmetry $I'=(i_{\sigma(1)},\dots,i_{\sigma(n)})$ of $I$ also belongs to $\tau^0_N.$ On the other hand, by 
 Lemma~\ref{L2s}, we have $m_N^I=\deg V\cdot d^n,$ for all $I\in\tau_N^0.$ Therefore, by (\ref{ph1}) we have 
\begin{align*}
\sum_{I=(i_1,\dots,i_n)\in\tau_N^0}&m_N^I\cdot i_1=\cdots=\sum_{I=(i_1,\dots,i_n)\in\tau_N^0}m_N^I\cdot i_n\notag\\
&=\deg V\cdot d^n\cdot \sum_{I\in\tau_N^0}\frac{\Vert I\Vert}{n}\notag\\
&=\deg V\cdot d^n\cdot \left(\sum_{I\in\tau_N}\frac{\Vert I\Vert}{n}-
 \sum_{I\in\tau_N\setminus\tau^0_N}\frac{\Vert I\Vert}{n}\right)\notag\\
&\geq\deg V\cdot d^n\left(\sum_{k=0}^{\frac{N}{d}}\frac{k}{n}
\cdot\binom{k+n-1}{n-1}-\left(\#\tau_N-\#\tau_N^0\right)
\cdot \frac{N}{nd}\right)\notag\\
&=\deg V\cdot d^n\left(\sum_{k=0}^{\frac{N}{d}}\frac{k}{n}
\cdot\binom{k+n-1}{n-1}-O(N^{n-1})
\cdot \frac{N}{nd}\right)\notag\\
&=\deg V\cdot d^n\sum_{k=1}^{\frac{N}{d}}\binom{k+n-1}{n}-O(N^{n})\notag\\
&=\deg V \cdot d^n \binom{\frac{N}{d}+n}{n+1}-O(N^{n})\notag\\
&\geq \frac{\deg V}{d\cdot (n+1)!}N^{n+1}-O(N^n).
\end{align*}
Hence, for each $i\in\{1,\dots,n\}$
\begin{align*}
\sum_{I=(i_1,\dots,i_n)\in\tau_N}m_N^I\cdot i_s&\geq \sum_{I=(i_1,\dots,i_n)\in\tau_N^0}m_N^I\cdot i_s\notag
\\
&\geq \frac{\deg V}{d\cdot (n+1)!}N^{n+1}-O(N^n).
\end{align*}
\end{proof}
 We recall that by (\ref{ttt}), for $N>>0,$ we have
$$\dim_{\mathcal R_{A, \Cal Q}}\frac{\mathcal R_{A, \Cal Q}[x_0,\dots,x_M]_N}{\mathcal I_{A, \Cal Q}(V)_N}
=H_V(N)
=\deg V\cdot \frac{N^n}{n!} +O(N^{n-1}).$$
Therefore, from Lemmas \ref{L1s}, \ref{L3s} we get immediately the following result.
\begin{lemma}\label{L4s}
For all $N>>0$ divisible $d,$ there are homogeneous polynomials $\phi_1,\dots,\phi_{H_V(N)}$  in $ \mathcal R_{A, \Cal Q}[x_0,\dots,x_M]_N$ such that they form a basis
 of the $\mathcal R_{A, \Cal Q}-$ vector space
$\frac{\mathcal R_{A, \Cal Q}[x_0,\dots,x_M]_N}{\mathcal I_{A, \Cal Q}(V)_N},$  and
 \begin{align*}
\prod_{j=1}^{H_V(N)}\phi_j-\big(Q_{1}\cdots Q_{n}\big)^{\frac{\deg V\cdot N^{n+1}}{d\cdot (n+1)!}-u(N)}\cdot P\in\mathcal I_{A, \Cal Q}(V)_N,
\end{align*}
where  $u(N)$ is a function  satisfying $u(N)\leq O(N^n)$,  $P \in \mathcal R_{A, \Cal Q}[x_0,\dots,x_M]$ is a homogeneous polynomials of degree
$$N\cdot H_V(N)-\frac{n\cdot\deg V\cdot N^{n+1}}{(n+1)!}+u(N)=\frac{\deg V\cdot N^{n+1}}{(n+1)!}+O(N^n).$$
\end{lemma}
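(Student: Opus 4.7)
My plan is to take $\phi_1,\dots,\phi_{H_V(N)}$ to be precisely the monomial basis supplied by Lemma~\ref{L1s}, and to observe that their product already has the factorization claimed, with the $Q_i$-exponents controlled by Lemma~\ref{L3s}.

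First I would apply Lemma~\ref{L1s} to get that the classes $\{[Q_1^{i_1}\cdots Q_n^{i_n}\cdot\gamma_{I,\ell}]\}_{I\in\tau_N,\,\ell=1,\dots,m_N^I}$ form a basis of $\mathcal R_{A,\Cal Q}[x_0,\dots,x_M]_N/\mathcal I_{A,\Cal Q}(V)_N$; by (\ref{ttt}) this basis has cardinality $\sum_{I\in\tau_N}m_N^I=H_V(N)$. Relabel these representatives as $\phi_1,\dots,\phi_{H_V(N)}$. Multiplying them together and collecting the powers of each $Q_s$ yields
\[
\prod_{j=1}^{H_V(N)}\phi_j \;=\; Q_1^{e_1}\cdots Q_n^{e_n}\cdot G,\qquad G:=\prod_{I,\ell}\gamma_{I,\ell},
\]
where $e_s:=\sum_{I=(i_1,\dots,i_n)\in\tau_N}m_N^I\cdot i_s$ is a non-negative integer. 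Lemma~\ref{L3s} then provides the uniform lower bound $e_s\geq\frac{\deg V}{d(n+1)!}N^{n+1}-O(N^n)$ for every $s\in\{1,\dots,n\}$.

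Next I would set $k:=\min_{1\leq s\leq n}e_s$, $u(N):=\frac{\deg V\cdot N^{n+1}}{d(n+1)!}-k$, and $P:=Q_1^{e_1-k}\cdots Q_n^{e_n-k}\cdot G$. By the lower bound just recalled, $u(N)\leq O(N^n)$, and by construction the identity
\[
\prod_{j=1}^{H_V(N)}\phi_j=(Q_1\cdots Q_n)^k\cdot P
\]
holds as an honest equality in $\mathcal R_{A,\Cal Q}[x_0,\dots,x_M]$, so the difference is the zero polynomial and hence trivially belongs to $\mathcal I_{A,\Cal Q}(V)$. For the degree of $P$, the Hilbert polynomial expansion $H_V(N)=\deg V\cdot N^n/n!+O(N^{n-1})$ gives $\deg(\prod\phi_j)=N\cdot H_V(N)=\deg V\cdot N^{n+1}/n!+O(N^n)$, whence
\[
\deg P = N\cdot H_V(N)-nd\cdot k = \frac{(n+1)\deg V\cdot N^{n+1}-n\deg V\cdot N^{n+1}}{(n+1)!}+O(N^n)=\frac{\deg V\cdot N^{n+1}}{(n+1)!}+O(N^n),
\]
matching the claim.

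I do not anticipate any real obstacle: all the substantive work has already been done in Lemmas~\ref{L1s} and~\ref{L3s}, and what remains is a direct packaging. The only point requiring a hint of care is that $\frac{\deg V\cdot N^{n+1}}{d(n+1)!}$ need not itself be an integer, but letting $u(N)$ absorb both the fractional part and the $O(N^n)$ deficit between this quantity and $k=\min_s e_s$ resolves this cleanly, and the claimed degree of $P$ is then forced by dimension counting.
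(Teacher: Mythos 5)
Your proof is correct and is precisely the ``immediate'' deduction the paper invokes from Lemmas~\ref{L1s} and~\ref{L3s}: the paper itself offers no written proof of Lemma~\ref{L4s}, merely noting that it follows from those two lemmas together with the Hilbert polynomial computation \eqref{ttt}, and your spelled-out argument---take the $\phi_j$ to be the monomials $Q_1^{i_1}\cdots Q_n^{i_n}\gamma_{I\ell}$, collect the $Q_s$-exponents $e_s=\sum_I m_N^I i_s$, set $k=\min_s e_s$ and absorb the discrepancy into $u(N)$---is exactly that. The only cosmetic mismatch is that, with this construction, the correction to $\deg P$ is $dn\cdot u(N)$ rather than $u(N)$ as literally written in the statement, but since both are $O(N^n)$ the claimed asymptotic $\deg V\cdot N^{n+1}/(n+1)!+O(N^n)$ is unaffected (the paper is being loose with a single symbol $u(N)$ here, not making a substantive claim).
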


\section{Proof of our main theorem}
\begin{proof}
By Lemma 2.1 in \cite{CRY3}, there exists an infinite index subset $A\subset \Lambda$ which is coherent with respect to $\mathcal Q.$   By Remark \ref{remark}, we may assume that the polynomials $Q_j$'s have the same degree $d\ge 1$ and their coefficients  belong to the field  $\mathcal R_{A, \Cal Q}.$  By the fact that for any infinite subset $B$ of $A$, then $B$ is also coherent with respect to $\mathcal Q$ and $\Cal R_{B,\Cal Q}\subset\Cal R_{A,\Cal Q},$ in our proof, we may freely pass to infinite subsets. For simplicity, we still denote these infinite subsets by $A.$

From the assumption, for each $a\in \mathcal R_{A, \Cal Q},$ and $v\in M(k)$, we have, for all $\alpha \in A$,
\begin{align}\label{ct21s}
\log ||a(\alpha)||_v\le \sum_{v\in M(k)} \log^{+}||a(\alpha)||_v=h(a(\alpha))\le o(h(x(\alpha))).
\end{align}
For each $v\in S$, and $\alpha \in A$, there exist a subset $J(v, \alpha)=\{j_1(v, \alpha), \dots, j_n(v, \alpha)\} \subset \{1,\dots,q\}$ such that
\begin{align*}
 0< ||Q_{j_1(v, \alpha)}(\alpha)(x(\alpha))||_v\le \dots& \le ||Q_{j_n(v, \alpha)}(\alpha)(x(\alpha))||_v\\
&\le \min_{j\not \in \{j_1(v, \alpha), \dots, j_n(v, \alpha)\}} ||Q_{j}(\alpha)(x(\alpha))||_v.
\end{align*}
By Lemma \ref{lem21s}, we have
\begin{align}\label{ct22s}
\log \prod_{j=1}^{q}||Q_j(\alpha)(x(\alpha))||_v&=\log \prod_{\beta_j\not \in J(v, \alpha)}||Q_{\beta_j}(\alpha)(x(\alpha))||_v+\log \prod_{i=1}^{n}||Q_{j_i(v, \alpha)}(\alpha)(x(\alpha))||_v\notag\\
&\ge (q-n)d \log ||x(\alpha)||_v-\log \overset{\sim}h_v(x(\alpha))\notag\\
&+\log \prod_{i=1}^{n}||Q_{j_i(v, \alpha)}(\alpha)(x(\alpha))||_v,
\end{align}
where $ \overset{\sim}h_v=\prod(1+h_{\mu})$, $h_{\mu}$ runs over all the choices of $l_{2, v}$, thus $ \overset{\sim}h_v \in \mathcal C_x.$

\noindent By Lemma \ref{L4s}, there exist homogeneous polynomials $\phi_1^{J(v, \alpha)}, \dots, \phi_{H_V(N)}^{J(v, \alpha)}$(depend on $J(v, \alpha)$) in $\mathcal R_{A, \Cal Q}[x_0, \dots, x_M]_N$ and there are functions $u(N), v(N)$ (common for all $J(v,\alpha))$ such that $\{\phi_i^{J(v, \alpha)}\}$  a basic of $\mathcal R_{A, \Cal Q}-$ vector space $\dfrac{\mathcal R_{A, \Cal Q}[x_0, \dots, x_M]_N}{\mathcal I_{A, \Cal Q}(V)_N}$ and
$$ \prod_{\ell=1}^{H_V(N)}\phi_\ell^{J(v, \alpha)}-(Q_{j_1(v,\alpha)}\dots Q_{j_n(v, \alpha)})^{\dfrac{\deg V \cdot N^{n+1}}{d(n+1)!}-u(N)}P_{J(v,\alpha)} \in  \mathcal I_{A, \Cal Q}(V)_N,$$
where  $P_{J(v,\alpha)}\in \mathcal R_{A, \Cal Q}[x_0,\dots,x_M]$ is a homogeneous polynomials of degree
$\frac{\deg V\cdot N^{n+1}}{(n+1)!}+v(N).$

\noindent Thus, for all $x(\alpha) \in V(k)$, we have
\begin{align*} \prod_{\ell=1}^{H_V(N)}\phi_\ell^{J(v, \alpha)}(\alpha)(x(\alpha))=\Big(\prod_{i=1}^nQ_{j_i(v,\alpha)}(\alpha)(x(\alpha))\Big)^{\dfrac{\deg V \cdot N^{n+1}}{d(n+1)!}-u(N)}P_{J(v,\alpha)}(\alpha)(x(\alpha)) .
\end{align*}
On the other hand, it is easy to that there exist $h_{J(v, \alpha)}\in \mathcal C_x$ such that
\begin{align*}
||P_{J(v,\alpha)}(\alpha)(x(\alpha))||_v &\leq ||(x(\alpha))||_v^{\deg P_{J(v,\alpha)}}h_{J(v, \alpha)}(\alpha)\\
&=||(x(\alpha))||_v^{\frac{\deg V\cdot N^{n+1}}{(n+1)!}+v(N)}h_{J(v, \alpha)}(\alpha).
\end{align*}
Therefore,
\begin{align*}
\log \prod_{\ell=1}^{H_N(V)}||\phi_\ell^{J(v, \alpha)}(\alpha)(x(\alpha))||_v
&\leq\Big( \dfrac{\deg V . N^{n+1}}{d(n+1)!}-u(N)\Big)\cdot\log ||\prod_{i=1}^nQ_{j_i(v,\alpha)}(\alpha)(x(\alpha))||_v\\
&+\log^+ h_{J(v, \alpha)}(\alpha)+( \dfrac{\deg V . N^{n+1}}{(n+1)!}+v(N))\log ||x(\alpha))||_v
\end{align*}
This implies that there are functions $\omega_1(N),\omega_2(N)\leq O(\frac{1}{N})$ such that
\begin{align}\label{ct23s}
\log ||\prod_{i=1}^nQ_{j_i(v,\alpha)}(\alpha)(x(\alpha))||_v&\geq  \Big(\dfrac{d(n+1)!}{\deg V . N^{n+1}}-\frac{\omega_1(N)}{N^{n+1}}\Big)\cdot\log \prod_{\ell=1}^{H_N(V)}||\phi_\ell^{J(v, \alpha)}(\alpha)(x(\alpha))||_v \notag\\
&-\log ^+\overset{\sim}h_{J(v, \alpha)}(\alpha)-(d+\omega_2(N))\log ||x(\alpha))||_v,
\end{align}
for some $\overset{\sim}h_{J(v, \alpha)} \in \mathcal C_x$.

\noindent By (\ref{ct22s}) and (\ref{ct23s}), we have
\begin{align}\label{ct26s}
\log \prod_{j=1}^{q}||Q_j(\alpha)(x(\alpha))||_v&\ge (q-n-1)d \log ||x(\alpha)||_v-\log^+ \overset{\sim}h_v(x(\alpha))\notag\\
&+\Big(\dfrac{d(n+1)!}{\deg V . N^{n+1}}-\frac{\omega_1(N)}{N^{n+1}}\Big)\cdot\log \prod_{\ell=1}^{H_N(V)}||\phi_\ell^{J(v, \alpha)}(\alpha)(x(\alpha))||_v \notag\\
&-\log^+ \overset{\sim}h_{J(v, \alpha)}(\alpha)-\omega_2(N)\log ||x(\alpha))||_v.
\end{align}

We fix homogeneous polynomials $\Phi_1, \dots, \Phi_{H_N(V)} \in \mathcal R_{A, \{Q_j\}_{j=1}^{q}}[x_0, \dots, x_M]_N$ such that they form a basic of $\mathcal R_{A, \Cal Q}-$ vector space $\dfrac{\mathcal R_{A, \Cal Q}[x_0, \dots, x_M]_N}{{{\mathcal I}_{A, \Cal Q}(V)}_N}.$ Then, there exist homogeneous linear polynomials
$$L_1^{J(v, \alpha)}, \dots, L_{H_N(V)}^{J(v, \alpha)}\in \mathcal R_{A, \Cal Q}[y_1, \dots, y_{H_V(N)}]$$ such that they are linear independent over $\mathcal R_{A, \{Q_j\}_{j=1}^{q}}$ and
$$ \phi_\ell^{J(v, \alpha)}-L_\ell^{J(v,\alpha)}(\Phi_1, \dots, \Phi_{H_N(V)}) \in \mathcal I_{A, \Cal Q}(V)_N,$$
for all $\ell=1, \dots, H_V(N).$ It is clear that  $h(L_\ell^{J(v, \alpha)}(\beta))=o(h(x(\beta))), \beta\in A$ and $A$ is coherent with respect to $\{L_\ell\}_{\ell=1}^{H_N(V)}.$

We have,
\begin{align}\label{ct24s}
 \prod_{\ell=1}^{H_V(N)}||\phi_\ell^{J(v, \alpha)}(\alpha)(x(\alpha))||_v=\prod_{\ell=1}^{H_N(V)}||L_\ell^{J(v, \alpha)}(\Phi_1, \dots, \Phi_{H_V(N)})(\alpha)(x(\alpha))||_v.
\end{align}
We  write
$$L_\ell^{J(v, \alpha)}(y_1, \dots, y_{H_V(N)})=\sum_{s=1}^{H_V(N)}g_{\ell r}y_s, \quad g_{\ell s}\in\mathcal R_{A, \Cal Q}.$$
Since $L_1^{J(v, \alpha)}, \dots, L_{H_N(V)}^{J(v, \alpha)}$ are linear independent over $\mathcal R_{A, \Cal Q},$   we have $\det (h_{\ell s})\ne 0\in \mathcal R_{A,\Cal Q}.$
 Thus, due to cohenrent property  of $A,$
$\det(h_{\ell s})(\beta)\ne 0$  for all $\beta\in A$, outside a finite subset of $A.$ By passing to an infinite subset if necessary, we may assume that $L_1^{J(v,\alpha)}(\beta),\dots,L_{H_N(V)}^{J(v,\alpha)}(\beta)$ are lineraly independent over $k$ for all $\beta\in A.$

Now we consider the sequence of points
$F(\alpha)=[\Phi_1(x(\alpha)), \dots, \Phi_{H_N(V)}(x(\alpha))]$ from $A$ to $\P^{H_V(N)-1}(k)$ and
moving hyperplanes $\Cal L:=\{L_1^{J(v, \alpha)}, \dots, L_{H_N(V)}^{J(v, \alpha)}\}$ in $\P^{H_V(N)-1}(k),$ indexed by $A.$ We claim that $F$ is linearly nondegenerate with respect to $\Cal L.$ Indeed, ortherwise, then there is a linear form $L\in\Cal R_{B, \Cal L}[y_1,\dots, y_{H_N(V)}]$ for some infinite coherent subset $B\subset A,$ such that $L(F)|_B\equiv 0$ in $B,$ which contradicts to the assumption that $x$ is algebraically nondegenerate with respect to $\Cal Q.$

By Theorem A, for any $\epsilon >0,$ there is an infinite subset of $A$ (common for all $J(v,\alpha)),$ denoted again by $A$, such that
\begin{align}\label{ct25s}
\sum_{v\in S} \log \prod_{\ell=1}^{H_V(N)}\dfrac{||F(\alpha)||_v||L_\ell^{J(v, \alpha)}(\alpha)||_v}{||L_\ell^{J(v, \alpha)}(\alpha)(F(\alpha))||_v}\le (H_V(N)+\varepsilon)h(F(\alpha)),
\end{align}
for all $\alpha \in A.$

\noindent Combining with (\ref{ct26s}) and (\ref{ct24s}) we have
\begin{align}\label{ct27s}
\sum_{v\in S}\sum_{j=1}^{q}\log \dfrac{||x(\alpha)||_v^{d}}{||Q_j(\alpha)(x(\alpha))||_v}&\le (n+1)d \sum_{v\in S}\log ||x(\alpha)||_v+\omega_2(N)\sum_{v\in S}\log ||x(\alpha))||_v\notag\\
&+\Big(\dfrac{d(n+1)!}{\deg V \cdot N^{n+1}}-\frac{\omega_1(N)}{N^{n+1}}\Big)\sum_{v\in S}\log \prod_{\ell=1}^{H_N(V)}\dfrac{||F(\alpha)||_v||L_\ell^{J(v,\alpha)}(\alpha)||_v}{||L_\ell^{J(v, \alpha)}(\alpha)(x(\alpha))||_v}\notag\\
&-H_V(N)\Big(\dfrac{d(n+1)!}{\deg V \cdot N^{n+1}}-\frac{\omega_1(N)}{N^{n+1}}\Big)\sum_{v\in S}\log ||F(\alpha)||_v\notag\\
&+o(h(x(\alpha))).
\end{align}
Since the above inequality is independent of the choice of components of $x(\alpha)$, we can choose the components of $x(\alpha)$ being $S-$ integers so that
\begin{align}\label{phuong}\sum_{v\in S}\log ||x(\alpha)||_v&=h(x(\alpha)),  \text{and}
\sum_{v\in S}\log\Vert F(\alpha)\Vert_v&= h(F(\alpha)) \le Nh(x(\alpha))+o(h(x(\alpha))).
\end{align}
Combining with (\ref{ct27s}) and (\ref{ct25s}), we obtain
\begin{align*}
\sum_{v\in S}\sum_{j=1}^{q}\log \dfrac{||x(\alpha)||_v^{d}}{||Q_j(\alpha)(x(\alpha))||_v}&\le (n+1)dh(x(\alpha))+\omega_2(N) h(x(\alpha))\\
&+\Big(\dfrac{d(n+1)!}{\deg V \cdot N^{n+1}}-\frac{\omega_1(N)}{N^{n+1}}\Big)(H_V(N)+\varepsilon)h(F(\alpha))\\
&-H_V(N)\Big(\dfrac{d(n+1)!}{\deg V \cdot N^{n+1}}-\frac{\omega_1(N)}{N^{n+1}}\Big)h(F(\alpha))\\
&+o(h(x(\alpha)).
\end{align*}
Hence, we obtain
\begin{align}\label{ct28s}
\sum_{v\in S}\sum_{j=1}^{q}\log \dfrac{||x(\alpha)||_v^{d}||Q_j(\alpha)||_v}{||Q_j(\alpha)(x(\alpha))||_v}&\le  (n+1)dh(x(\alpha))+\omega_2(N)h(x(\alpha))\notag\\
&+\epsilon\Big(\dfrac{d(n+1)!}{\deg V . N^{n+1}}-\frac{\omega_1(N)}{N^{n+1}}\Big)h(F(\alpha))+o(h(x(\alpha))).
\end{align}
Here, we note that $h(Q_j(\alpha))=o(h(x(\alpha))).$

\noindent Combining with (\ref{phuong}), by our choice with $\omega_1,\omega_2$ for  $N$ large enough, we get
\begin{align*}
\sum_{v\in S}\sum_{j=1}^{q}\log \dfrac{||x(\alpha)||_v^{d}||Q_j(\alpha)||_v}{||Q_j(\alpha)(x(\alpha))||_v}\le  (n+1+\varepsilon)d h(x(\alpha)),
\end{align*}
for all $\alpha\in A.$
This completes the proof of Theorem \ref{Schmidt}.
\end{proof}

\noindent Nguyen Thanh Son\\
Department of Mathematics\\
  Hanoi National University of Education\\
 136-Xuan Thuy street, Cau Giay, Hanoi, Vietnam\\
e-mail: k16toannguyenthanhson@gmail.com\\

\noindent Tran Van Tan\\
Department of Mathematics\\
  Hanoi National University of Education\\
 136-Xuan Thuy street, Cau Giay, Hanoi, Vietnam\\
e-mail: tranvantanhn@yahoo.com\\

\noindent Nguyen Van Thin\\
Department of Mathematics\\
 Thai Nguyen University of Education\\
Luong Ngoc Quyen Street, 
Thai Nguyen city, Vietnam.\\
e-mail: thinmath@gmail.com\\

\end{document}